\newtheorem{theorem}{Theorem}[section]
\newtheorem{lemma}[theorem]{Lemma}
\newtheorem{proposition}[theorem]{Proposition}
\newtheorem{remark}[theorem]{Remark}
\newtheorem{definition}{Definition}[section]
\newtheorem{corollary}[theorem]{Corollary}
\newcommand{\q}{\quad}
\newcommand{\qq}{\quad\quad}
\newcommand{\qqq}{\quad\quad\quad}
\newcommand{\qqqq}{\quad\quad\quad\quad}
\newcommand{\al}{\alpha}
\newcommand{\de}{\delta}
\newcommand{\ez}{\epsilon}
\newcommand{\la}{\lambda}
\newcommand{\si}{\sigma}
\newcommand{\cm}{\mathcal M}
\newcommand{\cb}{\mathcal B}
\newcommand{\cd}{\mathcal D}
\newcommand{\cq}{\mathcal Q}
\newcommand{\crec}{\mathcal R}
\newcommand{\f}{\frac}
\newcommand{\nf}{\infty}
\newcommand{\rrr}{\mathbf R}
\newcommand{\rn}{\mathbf R^n}
\def\rr{{\mathbb R}}
\def\nn{{\mathbb N}}
\def\lf{\left}
\def\r{\right}
\numberwithin{equation}{section}
\begin{document}

\title[The multilinear strong maximal function]{The multilinear strong maximal function}

\author{Loukas Grafakos, Liguang Liu, Carlos P\'erez, Rodolfo H.  Torres}

\begin{abstract}
A multivariable version of the strong maximal function is introduced and
a sharp distributional estimate for this operator in the spirit of the Jessen, Marcinkiewicz,
and Zygmund theorem is obtained. Conditions that characterize
the boundedness of this multivariable operator on
products of weighted Lebesgue spaces
equipped with multiple weights are obtained.
Results for other multi(sub)linear maximal functions associated with bases of open
sets are studied too.    Bilinear interpolation  results between distributional
estimates, such as those satisfied by the multivariable  strong maximal function,  are also proved. 
\end{abstract}

\address{Loukas Grafakos\\
Department of Mathematics\\
University of Missouri\\
Columbia, MO 65211, USA } \email{grafakosl@missouri.edu}

\address{Liguang Liu\\
Department of Mathematics\\
School of Information\\
Renmin University of China\\
Beijing 100872, China }
\email{liguangbnu@gmail.com}

\address{Carlos P\'erez\\
Departamento De An\'alisis Matem\'atico, Facultad de Matem\'aticas,
Universidad De Sevilla, 41080 Sevilla, Spain.}
\email{carlosperez@us.es}

\address{Rodolfo H. Torres\\
Department of Mathematics, University of Kansas, 405 Snow Hall 1460
Jayhawk Blvd, Lawrence, Kansas 66045-7523, USA.}
\email{torres@math.ku.edu}

\date{\today}

\subjclass{Primary  42B20, 42B25. Secondary 46B70, 47B38.}

\keywords{Maximal operators, weighted norm inequalities, multilinear singular integrals, Calder\'on-Zygmund theory, commutators.}

\thanks{ The authors  would like to acknowledge  the support of the following grants.
First author: NSF grant DMS 0900946.  Third author: Spanish Ministry
of Science and Innovation grant MTM2009-08934. Fourth author:  NSF
grant DMS 0800492 and a General Research Fund allocation of the
University of Kansas. }\maketitle



\section{Introduction}
Maximal functions have proved to be  tools of great importance in
harmonic analysis. Their study not only contains intrinsic interest
but also intertwines with the study of singular integral operators,
most notably in the context of weighted norm inequalities.
Maximal functions have also
multi(sub)linear versions 
that play an equally important role in the study of multilinear operators.
Some fundamental linear results are no longer readily available in the
multilinear setting, but this shortfall  puts in evidence new
interesting phenomena that lead to further investigation. Finding
appropriate substitutes and alternative tools in the study of
multilinear problems is an ongoing project undertaken by
a number of 
researchers. The purpose of this article is to contribute to this
endeavor by studying multilinear versions of the strong maximal
function and intimately related topics.

Some of the motivation for our work arises from the recent article
by Lerner et al \cite{LOPTT} where the multisublinear maximal function
\begin{equation}\label{first11}
\cm (f_1,\dots , f_m)(x)=\sup_{\substack{Q\ni x\\
Q\,\,\textup{cube}} } \prod_{i=1}^m\frac{1}{|Q|}\int_Q|f_i(y_i)|\,
dy_i
\end{equation}
associated with cubes with sides parallel to the coordinate axes was
introduced. This maximal function
is an analogue of the Hardy--Littlewood maximal function and 
led to the characterization of the
class of multiple weights for which multilinear Calder\'on-Zygmund
operators are bounded on products of weighted Lebesgue spaces.
Such operators were  introduced by Coifman and Meyer in \cite{CM1}, \cite{CM2}
and were systematically studied in Grafakos and Torres \cite{GT4} (see also the references therein).

The boundedness
$$
\cm: L^{p_1}(\rn) \times  \dots \times L^{p_m}(\rn)  \to L^p(\rn)
$$
whenever
$$
1<p_1, \dots, p_m \le \nf  \qq \mbox{and}  \qq  \f1p=\f1{p_1}+ \dots +\f1{p_m}\, ,
$$
is a simple consequence of  H\"older's inequality and of the trivial observation that
\begin{equation}\label{e4}
\cm(f_1,\dots , f_m)(x) \le \prod_{i=1}^m M(f_i) \, ,
\end{equation}
where $M$ stands for the classical Hardy--Littlewood maximal function.
Also, H\"older's inequality  for weak spaces yields the appropriate endpoint boundedness, namely
$$
\cm: L^{1}(\rn) \times  \dots \times L^{1}(\rn)  \to L^{\f1m,\infty}(\rn)\, .
$$

In this article we study corresponding estimates for
the {\it $m$-sublinear version of the strong maximal function}, or simply (with a certain abuse of terminology)
{\it the strong multilinear maximal function}. We define this operator as
$$
\cm_{\crec} (\vec f\,)(x)= \sup_{R\ni x} \prod_{i=1}^m \lf(\f 1{|R|}\int_R |f_i(y)|\,dy\r)
\q x \in \rn\, ,
$$
where $\vec f =(f_1,\dots,f_m)$ is an  $m$-dimensional vector of
locally integrable functions and where the supremum is taken over
all rectangles with sides parallel to the coordinate axes. Like
$\cm$, the strong multilinear maximal functions is controlled by the
$m$-fold tensor product of the  maximal function  of each variable.
That is,
\begin{equation}\label{e3}
\cm_{\crec}(\vec f\,) \le \prod_{i=1}^m M_{\crec}(f_i)\, ,
\end{equation}
where $M_{\crec}$ denotes the strong maximal operator on $\rn$ given by
\begin{equation}\label{StrongMF}
M_{\crec}(f)(x)=\sup_{R\ni x} \f1{|R|}\int_R |f(y)|\,dy\,  , 
\end{equation}
 where  the supremum is taken over all rectangles with
sides parallel to the coordinate axes. Obviously, \eqref{e3}
together with H\"older's inequality, yields the appropriate strong
type boundedness
$$
\cm_{\crec}: L^{p_1}(\rn) \times  \dots \times L^{p_m}(\rn)  \to
L^p(\rn)
$$
whenever
$$
1<p_1, \dots, p_m \le \nf  \qq \mbox{and}  \qq  \f1p=\f1{p_1}+ \dots
+\f1{p_m}\, .
$$
However, $M_{\crec}$ is not of weak type $(1,1)$; 
as a substitute, Jessen, Marcinkiewicz and Zygmund \cite{jmz} showed that there is a constant $C_n$ depending
only on the dimension $n$ such that  for all $f $ on $\rn$,
\begin{equation*}
|\{x\in\rn:\, M_{\crec}(f)(x)>\la\}| \le C_n\int_{\rn}
\Phi_n\lf(\f{|f(x)|}{\la}\r)\,dx,\,  
\end{equation*}
where for $ t>0$,
$$
\Phi_n(t)=t(1+(\log^+t)^{n-1}) \approx  t(\log(e+t))^{n-1}.
 $$
 Unlike the case of cubes, in which the classical weak type endpoint  of  $M$ immediately extends to an estimate for its $m$-fold product on $L^1\times \dots \times L^1$ and hence to $\cm$,   it is not clear how to derive directly an appropriate endpoint  estimate
 for the $m$-fold product of  $M_{\crec}$ or even $\cm_{\crec}$. Part of the problem stems from the fact    that the Jessen, Marcinkiewicz and Zygmund result is a so-called modular estimate and not a norm estimate.

The geometry of rectangles in $\rn$ is
more intricate than that of cubes in $\rn$,  even when both classes
of sets are restricted to have sides parallel to the axes. The
failure of the engulfing property of intersecting rectangles
presents a crucial difference between these two classes and signals
that  the endpoint behavior of the strong maximal function is
fundamentally different than that of the Hardy-Littlewood maximal
operator. Delicate properties of rectangles in $\rn$
that still make possible the analysis  were quantified by C\'ordoba
and Fefferman \cite{cf} in their alternative geometric proof of the
classical Jessen, Marcinkiewicz, and Zygmund \cite{jmz} endpoint
estimate for the strong maximal function. We investigate the
analogous situation in the multilinear setting and look at endpoint
distributional estimates. An interesting point is that our result in
this direction, Theorem \ref{BSMF}, says that the operator is more
singular as $m$ increases.  In addition to the natural interest of
the strong maximal function, part of our motivation to study such
endpoint results arose also from a question posed in \cite{LOPTT}
about the analysis of multilinear commutators via multilinear
interpolation involving distributional estimates.

Obtaining multilinear estimates from linear ones
produces far from optimal results when a theory of weights  is
considered. Indeed, the pointwise estimate \eqref{e4} is not sharp
enough to be used as the starting point for the development of the
relevant theory for  the maximal function $\cm$. The work in
\cite{LOPTT} put in evidence that there is a much larger class of
weights that characterizes the boundedness of $\cm$, as well  as the
boundedness of  singular integrals and commutators,  than that
previously considered by Grafakos and Torres \cite{GT5} and P\'erez
and Torres \cite{PT}.

The  relevant class of  multiple  weights for $\cm$ is given by the
$A_{\vec P}$ condition: for $\vec P = (p_{1},\cdots, p_{m})$,\,with
$1<p_1, \dots , p_{m}<\infty$, $\vec w \in A_{\vec P}$ if
\begin{equation}\label{multiapCubes}
\sup_{Q}\frac{1}{|Q|}\int_Q\nu_{\vec
w}(x)\,dx\,\prod_{j=1}^m\Big(\frac{1}{|Q|}\int_Q
w_j(x)^{1-p'_j}\,dx\Big)^{\frac{p}{p'_j}}<\infty
\end{equation}
where $\frac{1}{p}=\frac{1}{p_1}+\dots+\frac{1}{p_m}$ and
$$\nu_{\vec w}=\prod_{j=1}^mw_j^{p/p_j}.$$
%
For $m=1$,   \eqref{multiapCubes} goes back to the well-known Muckenhoupt $A_p$ condition. It should be remarked that $$\prod_{j=1}^m A_{p_j}\subset A_{\vec P}$$
with strict  inclusion; see \cite[p.1232 and Remark~7.2]{LOPTT}.
We address similar questions involving $\cm_{\crec}$  and more
general maximal functions $\cm_{\cb}$ associated with other bases
$\cb$.  To do so we need to introduce appropriate new classes of
multiple weights; we do this in Section \ref{generalbasis}.

In the linear case, one may often obtain strong weighted estimates
from weak type ones using  interpolation and  the reverse H\"older's property of weights.
This approach was adapted by P\'erez \cite{P1} for general maximal functions.
However, a certain complication in the multilinear setting arises
and it does not seem to be possible to use interpolation to pass from
weak to strong estimates for the aforementioned classes of weights.
This complication is bypassed in this article by directly proving strong type estimates  using
delicate 
techniques for maximal functions on general  bases of open sets  
 adapted from 
the work of Jawerth and Torchinsky \cite{JT} and  Jawerth \cite{J}.

The main results in this article are the following:

1. A characterization of all $m$-tuples of weights $\vec w
=(w_1,\dots , w_m)$ for which the  multilinear strong maximal
function maps
$$
 L^{p_1}(w_1)   \times \cdots
\times L^{p_m}(w_m) \to L^{p}(\nu_{\vec w})$$
where $\nu_{\vec w}=\prod_{j=1}^mw_j^{p/p_j}$, $1<p_1,\dots , p_m<\nf $,
and $\f1p=\f1{p_1}+\dots + \f1{p_m}$.  This characterization requires the notion of the
multilinear $A_{\vec P}$ condition adapted to rectangles with sides parallel to the
axes in $\rn$ and is contained in Theorem \ref{1wghtStrChar}.

2.  The \,$ L^{p_1}(w_1)
 \times \cdots \times L^{p_m}(w_m) \to L^{p,
\nf}(\nu)$\, boundedness of the multilinear strong maximal function,
whenever the weights $w_1, \dots , w_m$ and an arbitrary $\nu$
satisfy a certain power bump   variant of the
 multilinear $A_p$ condition.
This is given in Theorem~\ref{twoweights}, from which
a characterization of the weak type inequality in the
case $\nu = \nu_{\vec w}$ also follows, see Corollary~\ref{weakoneweight}.

3.  A sharp distributional estimate for the multilinear strong
maximal operator, analogous to that of Jessen, Marcinkiewicz, and
Zygmund. This can be found in Theorem~\ref{BSMF}. We also note that
the type of multilinear endpoint distributional estimate obtained is
very suitable for purposes of interpolation. In fact, we present in
Theorem~\ref{interpolation2} a more general version of the bilinear
Marcinkiewicz interpolation theorem. We prove strong type bounds for
a certain range of exponents starting from  weak type bounds and
a distributional estimate like the one that  the strong maximal
function satisfies.

 To facilitate the reader's access to each of the independent
results contained herein, this article is organized as follows.  A
discussion of the classical weighted theory and its multilinear
extension with respect to general bases is given in Section
\ref{generalbasis}; this also contains the statements of
Theorem~\ref{twoweights} and Theorem~\ref{1wghtStrChar}.  The proof
of the weak boundedness of the strong maximal function in the case
of $m+1$ weights (Theorem~\ref{twoweights})
  is postponed until Section \ref{two-weight}, while
the proof of the strong boundedness in the case of $m$ weights
(Theorem~\ref{1wghtStrChar}) is given in Section
\ref{oneweightsection}. The endpoint estimate of Jessen,
Marcinkiewicz, and Zygmund  and its multilinear extension
(Theorem~\ref{BSMF}) are discussed in Section \ref{endpoint}. The
proof of the latter is presented in Section \ref{strongmaximalthm}.
Finally, the result on bilinear interpolation between distributional
estimates (Theorem~\ref{interpolation2}) and its application to
bilinear commutators are contained in Section \ref{interp22}.


\medskip

\noindent{\bf Acknowledgments:}
The authors would like to thank the referee for valuable remarks.

\section{Classical weighted theory for a general basis}\label{generalbasis}

In \cite{M} Muckenhoupt proved a fundamental result characterizing
all weights for which the Hardy--Littlewood maximal operator is
bounded. As it is nowadays well-known, the surprisingly simple
necessary and sufficient condition is the so-called
$A_{p}$ condition (cf. \eqref{multiapCubes} with $m=1$). A
different approach to this characterization was found by Jawerth
\cite{J} (see Theorem \ref{A} below) based on ideas of Sawyer
\cite{S}. Recently,  a simple and elegant proof of
this characterization,  which also yields the sharp bound in terms
of the $A_p$ constant of the weight, was given by  Lerner \cite{L}.

\subsection{ The maximal function for a general basis}

We start by introducing some notation. By a {\it basis} $\cb$ in
$\rn$ we mean a collection of open sets in $\rn$. We  say that $w$
is a {\it weight} associated with the basis $\mathcal B$ if $w$ is a
non-negative measurable function in $\rn$ such that $w (B) =
\int_{B} w(y)\, dy < \infty$ for each $B$ in $\mathcal B$.
$M_{\cb,w}$ is the corresponding maximal operator defined by
\[
 M_{{\mathcal B},w}(f)(x) =
\sup_{ \substack{ B\ni x \\ B\in {\mathcal B}  }} \frac{1}{w(B) }
\int_{B} |f(y)|\, w(y)dy
\]
for  $x \in \bigcup_{B \in \cb}B $ and $M_{\cb,w}f(x) = 0 $ for
$x\notin \bigcup_{B \in \cb} B$. If $w \equiv 1$, we simply write
$M_{\cb }f(x) $.


 Several 
important examples of bases arise by taking
$\cb= \cq$ the family of all open cubes in $\rn$ with sides parallel
to the axes, $\cb= \cd$ the family of all open dyadic cubes in
$\rn$,  and   $\cb = \crec$ the family of all open rectangles in
$\rn$ with sides parallel to the axes.   Other interesting examples 
are  given by bases of  open rectangles with sides parallel to the axes  and related side lengths.   We have for instance   the basis 
$\Re$  formed by all  rectangles in $\rr^3$ with side lengths are $s$, $t$, and $st$, for some $t,s>0$.  Similarly,  for a given parameter $N>1$,  we   consider the  the family $\Re_{N}$ of all  rectangles in $\rr^2$ with eccentricity $N$; namely those rectangles whose side lengths are $s$ and $Ns$ for some $s>0$.  This last basis  is associated 
with the so-called Nikodym maximal function. 


A weight $w$  associated with $\cb$ is said to satisfy the
$A_{p,\cb}$ condition, $1<p<\infty$, if
\begin{equation}\label{apb}
\sup_{B\in \cb} \left(\f 1 {|B|} \int _B  w \, dx \right)\,  \left( \f 1 {|B|}
\int _Bw^{1-p'}\, dx\right)^{\frac{p}{p'}} <\infty\, .
\end{equation}
In the limiting case $p = 1$ we say that $w$ satisfies the $A_{1, \mathcal B}$
if
$$
\left( \frac{1}{ |B| } \int_{B} w(y)\, dy \right) ess.sup_{B}
(w^{-1}) \le c
$$
for all $B \in \mathcal B$; this is equivalent to saying
$$
 M_{{\mathcal B}}w(x) \le c\, w (x)
$$
for almost all $x \in \rn$. It follows from these definitions and
H\"older's inequality that
$$
A_{p, \mathcal B} \subset   A_{q, \mathcal B}
$$
if $1 \le p \le q \le \infty$.  Then it is natural to define the
class $A_{\infty,\cb}$ by setting
$$
A_{\infty,\cb}= \bigcup_{p>1} A_{p,\cb}.
$$

One  reason  that this general framework is interesting to consider
 is the following theorem due to   Jawerth \cite{J} (see \cite{L} for a simpler proof).

\begin{theorem}\label{A}
Let $1  < p < \infty$. Suppose that $\mathcal B$ is a basis and that
$w$ is a weight associated with $\cb$, and set $\sigma = w^{1-p'}$. Then
$$
\left\{
\begin{array}{c}
 M_{\mathcal B}: \, L^{p}(w) \rightarrow L^{p}(w) \\
 M_{\mathcal B}: \, L^{p'}(\sigma) \rightarrow L^{p'}(\sigma)
\end{array}
\right.
$$
if and only if
$$
\left\{
\begin{array}{ll}
 w \in A_{p,\mathcal B}\\
 M_{{\mathcal B},w}:\, L^{p^{\prime}}(w) \rightarrow
L^{p^{\prime}}(w) \\
 M_{\mathcal B, \sigma}:\, L^{p}(\sigma) \rightarrow L^{p}(\sigma).
\end{array}
\right.
$$
\end{theorem}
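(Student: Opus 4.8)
The plan is to prove Theorem~\ref{A} by a circular chain of implications, following the Sawyer--Jawerth strategy in the form streamlined by Lerner~\cite{L}. The key heuristic is that the two ``dual'' strong-type bounds $M_{\cb}:L^p(w)\to L^p(w)$ and $M_{\cb}:L^{p'}(\sigma)\to L^{p'}(\sigma)$ (with $\sigma=w^{1-p'}$) encode, via duality and the $A_{p,\cb}$ relation $w\,\sigma^{p-1}=1$ on the relevant averages, exactly the same information as the pair of ``weighted'' weak/strong bounds $M_{\cb,w}:L^{p'}(w)\to L^{p'}(w)$ and $M_{\cb,\sigma}:L^p(\sigma)\to L^p(\sigma)$ once one knows $w\in A_{p,\cb}$.

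\smallskip
\noindent\textbf{Necessity of the $A_{p,\cb}$ condition.} First I would assume $M_{\cb}:L^p(w)\to L^p(w)$ and extract \eqref{apb}. Fix $B\in\cb$ and test the operator on $f=\sigma\chi_B$ (a function whose $L^p(w)$ norm is $(\int_B\sigma\,dx)^{1/p}$ since $\sigma^p w=\sigma$): for $x\in B$ one has $M_{\cb}f(x)\ge\frac1{|B|}\int_B\sigma\,dy$, so the $L^p(w)$ bound gives
\[
\Big(\frac1{|B|}\int_B\sigma\,dy\Big)^{p} w(B)\ \le\ C\int_B\sigma\,dx,
\]
which after dividing by $|B|$ rearranges to \eqref{apb}. (If $\int_B\sigma=\infty$ one truncates $\sigma$ first; if $\int_B\sigma=0$ the inequality is trivial.) Hence $w\in A_{p,\cb}$. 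The remaining two conditions on the right-hand side are \emph{weaker} than (in fact, consequences to be derived from) the self-improving nature of $A_{p,\cb}$ together with the two starting bounds: $M_{\cb,\sigma}:L^p(\sigma)\to L^p(\sigma)$ follows because, writing $h=fw/\sigma$ and using $w=\sigma^{1-p}$-type identities on averages, the averages $\frac1{\sigma(B)}\int_B|f|\sigma$ are comparable (up to the $A_{p,\cb}$ constant) to $\frac1{|B|}\int_B|f|\,dy$ times a correction controlled by $M_\cb(\sigma\chi_B)$; the $L^p(\sigma)$ bound then reduces to the given $L^p(\sigma)$ bound for $M_\cb$. The analogous manipulation with the roles of $w$ and $\sigma$, and $p$ and $p'$, exchanged yields $M_{\cb,w}:L^{p'}(w)\to L^{p'}(w)$.

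\smallskip
\noindent\textbf{Sufficiency.} Conversely, assume $w\in A_{p,\cb}$ and the two weighted bounds $M_{\cb,w}:L^{p'}(w)\to L^{p'}(w)$, $M_{\cb,\sigma}:L^p(\sigma)\to L^p(\sigma)$. To prove $M_\cb:L^p(w)\to L^p(w)$, I would dualize: it suffices to show
\[
\int_{\rn}M_\cb(f)\,g\,dx\ \le\ C\,\|f\|_{L^p(w)}\,\|g\|_{L^{p'}(w^{1-p'})}
\]
for nonnegative $f,g$. Linearizing $M_\cb f=\frac1{|B_x|}\int_{B_x}f$ with a measurable selection $x\mapsto B_x\in\cb$, the left side becomes $\int g(x)\frac1{|B_x|}\int_{B_x}f\,dx$. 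Now insert the weights: write $\frac1{|B_x|}\int_{B_x}f = \frac{1}{|B_x|}\int_{B_x}(f\sigma^{-1})\sigma$, recognize $\frac{\sigma(B_x)}{|B_x|}\le C\big(\frac{|B_x|}{w(B_x)}\big)^{1/(p-1)}$-type bounds from $A_{p,\cb}$, and bound $\frac1{\sigma(B_x)}\int_{B_x}f\sigma^{-1}\sigma\le M_{\cb,\sigma}(f\sigma^{-1})(x)$. After the dual pairing and one application of H\"older's inequality in $L^p(\sigma)\times L^{p'}(\sigma)$, the $g$-factor gets grouped as $\frac1{w(B_x)}\int_{B_x}(\text{something})w$ which is $\lesssim M_{\cb,w}(\cdot)(x)$; applying the two assumed weighted bounds and using $A_{p,\cb}$ once more to reconcile the measures finishes the estimate. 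The second conclusion $M_\cb:L^{p'}(\sigma)\to L^{p'}(\sigma)$ is literally the same argument with $(w,p)\leftrightarrow(\sigma,p')$, since $\sigma^{1-(p')'}=w$ and $A_{p,\cb}\Leftrightarrow \sigma\in A_{p',\cb}$.

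\smallskip
\noindent\textbf{Main obstacle.} The delicate point is that, unlike for the Hardy--Littlewood maximal function, a \emph{general} basis $\cb$ need not satisfy any covering (Besicovitch/Vitali) lemma, so one cannot linearize $M_\cb$ via a disjointification of the selected sets $B_x$, nor invoke reverse H\"older self-improvement of the $A_{p,\cb}$ weight. The whole force of the theorem is that it \emph{circumvents} covering arguments: it shows the failure of such geometry is precisely compensated by assuming the auxiliary weighted bounds for $M_{\cb,w}$ and $M_{\cb,\sigma}$ as hypotheses. Thus the technical heart is the bookkeeping in the sufficiency direction --- tracking how the three pieces ($A_{p,\cb}$ comparison of $|B|,w(B),\sigma(B)$; the $L^{p'}(w)$ bound; the $L^p(\sigma)$ bound) interlock under the dual pairing --- carried out without any appeal to disjointness of the $B_x$'s. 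I expect this interlocking, rather than any single inequality, to be the part requiring the most care.
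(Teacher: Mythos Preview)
The paper does not contain a proof of Theorem~\ref{A}. It is stated as background, attributed to Jawerth~\cite{J} with the remark ``see~\cite{L} for a simpler proof,'' and the paper moves on immediately to its consequences (Muckenhoupt bases, Theorem~\ref{MuckbasChar}, etc.). There is therefore no proof in the paper to compare your proposal against.

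Your outline is in the spirit of the cited references: linearize the maximal operator, dualize, and use the $A_{p,\cb}$ relation among $|B|$, $w(B)$, $\sigma(B)$ to pass between unweighted and weighted averages; and your diagnosis of the main obstacle (no covering lemma is available for a general basis, so the argument must be purely ``soft'') is exactly right. As written, however, the sketch is only a plan. In the necessity direction, the derivation of the bounds for $M_{\cb,\sigma}$ and $M_{\cb,w}$ from the two unweighted bounds is asserted (``the averages \dots\ are comparable \dots\ times a correction controlled by $M_\cb(\sigma\chi_B)$'') rather than argued, and the naive pointwise comparisons one writes down do not close without further input. In the sufficiency direction, the sentence ``the $g$-factor gets grouped as $\tfrac1{w(B_x)}\int_{B_x}(\text{something})w$ \dots\ applying the two assumed weighted bounds and using $A_{p,\cb}$ once more \dots\ finishes the estimate'' is precisely the step that carries the entire content of the theorem, and it is not carried out. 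You concede this yourself in your final paragraph. If you want to turn this into a proof you should consult~\cite{J} or~\cite{L} directly; the present paper will not help, since it simply quotes the result.
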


Theorem \ref{A} includes Muckenhoupt's result, mentioned above, that
for  $1 < p < \infty$,
\[
 M_{\mathcal Q}:\, L^{p}(d\mu) \rightarrow L^{p}(d\mu)
\]
holds if and only if  $d\mu = w(y)dy$, with $w \in A_{p,\mathcal Q}$,
which is simply the classical $A_p$ condition.

A key fact is that the proof of Theorem \ref{A}  in \cite{J} or
\cite{L} completely avoids the (difficult) \lq \lq reverse H\"older inequality".

\subsection{Muckenhoupt basis}

Following \cite{P1}, we   use the following class of bases.

\begin{definition}
We say that $\cb$ is a {\it Muckenhoupt basis} if for any\, $1<p<\infty$
\begin{equation}\label{mbounded}
M_\cb:\, L^p(w)\to L^p(w)
\end{equation}
for any $w \in A_{p,\cb}$.
\end{definition}

It is shown in \cite{P1} that this definition is equivalent to the
following result:

\begin{theorem}\label{MuckbasChar}
$\cb$\, is a Muckenhoupt basis
 if and only if 
for any\, $1 < p < \infty$,
$$
M_{\cb,w}:\, L^{p}(w) \rightarrow L^{p}(w)
$$
whenever $w \in A_{\infty, \cb}$.
\end{theorem}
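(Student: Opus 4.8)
The plan is to prove the two directions separately, using Theorem \ref{A} as the essential engine and the openness of the $A_{p,\cb}$ conditions in the exponent.

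\smallskip

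\emph{Sufficiency.} Suppose that for every $1<p<\infty$ one has $M_{\cb,w}\colon L^p(w)\to L^p(w)$ whenever $w\in A_{\infty,\cb}$. Fix $1<p<\infty$ and $w\in A_{p,\cb}$; we must show $M_\cb\colon L^p(w)\to L^p(w)$, i.e. that $\cb$ is Muckenhoupt. Set $\sigma=w^{1-p'}$, so that $\sigma$ is the dual weight and $w=\sigma^{1-p}$. The point is to verify the three conditions on the right-hand side of Theorem \ref{A}. The first, $w\in A_{p,\cb}$, is our hypothesis. For the second, $M_{\cb,w}\colon L^{p'}(w)\to L^{p'}(w)$: since $w\in A_{p,\cb}\subset A_{\infty,\cb}$, the assumed property applied with exponent $p'$ gives exactly this. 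For the third, $M_{\cb,\sigma}\colon L^p(\sigma)\to L^p(\sigma)$: one checks that $w\in A_{p,\cb}$ is equivalent to $\sigma\in A_{p',\cb}$ (a direct computation from \eqref{apb}, swapping the roles of the two factors), hence $\sigma\in A_{\infty,\cb}$, and the assumed property with exponent $p$ yields the claim. By Theorem \ref{A}, $M_\cb\colon L^p(w)\to L^p(w)$, so $\cb$ is a Muckenhoupt basis.

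\smallskip

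\emph{Necessity.} Suppose $\cb$ is a Muckenhoupt basis. Fix $1<p<\infty$ and $w\in A_{\infty,\cb}=\bigcup_{q>1}A_{q,\cb}$, so $w\in A_{q,\cb}$ for some $q>1$; we must show $M_{\cb,w}\colon L^p(w)\to L^p(w)$. The idea is to pick an auxiliary exponent $r$ with $1<r<\infty$ so that we may apply Theorem \ref{A} to the weight $w$ with exponent $r$ and read off the boundedness of $M_{\cb,w}$. Concretely, choose $r$ large enough that $r>q$ (so $w\in A_{q,\cb}\subset A_{r,\cb}$) and also $r'=\tfrac{r}{r-1}\le p$, which is possible for all $r$ sufficiently large since $r'\to 1$. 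Since $\cb$ is Muckenhoupt and $w\in A_{r,\cb}$, the equivalence in the definition of a Muckenhoupt basis combined with \eqref{mbounded} gives $M_\cb\colon L^r(w)\to L^r(w)$; moreover, setting $\sigma=w^{1-r'}\in A_{r',\cb}$, we also get $M_\cb\colon L^{r'}(\sigma)\to L^{r'}(\sigma)$. Now the left-hand side of Theorem \ref{A} (with exponent $r$, weight $w$) holds, hence so does the right-hand side, and in particular $M_{\cb,w}\colon L^{r'}(w)\to L^{r'}(w)$. Finally, one upgrades from the exponent $r'$ to $p\ge r'$: the operator $M_{\cb,w}$ is bounded on $L^\infty(w)$ trivially (it is an average against the probability measure $w\,dx/w(B)$), and interpolation between $L^{r'}(w)$ and $L^\infty(w)$ — these are all built on the single measure $w\,dx$, so the Marcinkiewicz/Riesz–Thorin machinery applies without any weight subtlety — gives $M_{\cb,w}\colon L^p(w)\to L^p(w)$ for every $p$ with $r'\le p\le\infty$, in particular for our fixed $p$.

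\smallskip

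\emph{Main obstacle.} The genuinely delicate point is the necessity direction, specifically the bookkeeping that lets Theorem \ref{A} be invoked: one needs an exponent $r$ for which the \emph{same} weight $w$ lies in $A_{r,\cb}$ (forcing $r$ large) while the conjugate $r'$ sits below the target $p$ (which is automatic once $r$ is large), and then one needs the harmless but necessary interpolation step on the fixed measure space $(\rn,w\,dx)$ to cover all $p\ge r'$ rather than just $p=r'$. All the weight-theoretic content — that one avoids the reverse H\"older inequality — is already packaged inside Theorem \ref{A}, so nothing hard happens beyond organizing these exponent choices; the sufficiency direction is essentially immediate once one records the self-duality $w\in A_{p,\cb}\Leftrightarrow w^{1-p'}\in A_{p',\cb}$.
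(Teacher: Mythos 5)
Your argument is correct. The paper does not actually prove this theorem — it only states it and defers to the reference \cite{P1} — but your route is the natural one suggested by the preceding text: invoke Theorem \ref{A} in both directions, using the self-duality $w\in A_{p,\cb}\Leftrightarrow w^{1-p'}\in A_{p',\cb}$ for sufficiency, and for necessity apply Theorem \ref{A} at a large exponent $r$ (chosen so that $w\in A_{r,\cb}$ and $r'\le p$ hold simultaneously) to get $M_{\cb,w}$ on $L^{r'}(w)$, then interpolate with the trivial $L^\infty(w)$ bound on the single measure space $(\rn,w\,dx)$; every step, including the exponent bookkeeping, checks out.
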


Most of the important bases are Muckenhoupt bases, and in particular
those mentioned above: $\cq, \cd$, $\crec$.  The fact that $\crec$
is a Muckenhoupt basis can be found in \cite{GCRdF}. The basis $\Re$
is also a Muckenhoupt basis as shown by R. Fefferman \cite{RF2}.

\subsection{ The multisublinear maximal operator for a general basis}

We are interested in extending some of the main results in
\cite{LOPTT} concerning the maximal operator in \eqref{first11} to
other bases. We introduce a multisublinear version of the maximal
operator  $\cm_\cb$  by setting
$$\cm_\cb ( f_1, \dots , f_m) (x)= \sup_{B\ni x} \prod_{i=1}^m \lf(\f 1{|B|}\int_B |f_i(y)|\,dy\r).$$

For a basis $\cb$  we define the multiple weight $A_{\vec P,\cb}$
condition as in \cite{LOPTT}:

\begin{definition}
Let  $1\le p_1, \dots , p_{m}<\infty$.
Given $\vec w=(w_1,\dots,w_m)$, set
\begin{equation}\label{oneweight}
\nu_{\vec w} = \prod_{i=1}^m  w_i^{p/p_i}  . 
\end{equation}
We say that the m-tuple of weights $\vec w$ satisfies the $A_{\vec
P,\cb}$ condition if
\begin{equation}\label{multiapBasisoneweight}
\sup_{B\in \cb}\Big(\frac{1}{|B|}\int_B\nu_{\vec w}(x)\,dx\Big)\,
\prod_{j=1}^m\Big(\frac{1}{|B|}\int_B w_j(x)^{1-p'_j}\,dx
\Big)^{\frac{p}{p'_j}}<\infty.
\end{equation}
When $p_j=1$, $\Big(\frac{1}{|B|}\int_B w_j^{1-p'_j}\Big)^{1/p'_j}$
is understood as $\displaystyle(\inf_Bw_j)^{-1}$.
We use \, $[\vec w\,]_{A_{\vec P, \cb}}$ \,to denote the quantity in \eqref{multiapBasisoneweight}.

\end{definition}

\subsection{The case of $m+1$ weights:  weak type estimates and the power bump condition}

In view of the $A_{\vec P,\cb}$ condition, it is natural to say that
the $(m+1)$-tuple of weights $(\nu, \vec w)$ satisfies the multiple
multilinear condition $A_{\vec P,\cb}$ if
\begin{equation}\label{multiapBasistwoweights}
\sup_{B\in \cb}\Big(\frac{1}{|B|}\int_B\nu(x)\,dx\Big)\,
\prod_{j=1}^m\Big(\frac{1}{|B|}\int_B w_j(x)^{1-p'_j}\,dx
\Big)^{\frac{p}{p'_j}}<\infty\, . 
\end{equation}
Note that here $\nu$ is not assumed to be the weight  $\nu_{\vec w}$
determined by $w$ in \eqref{oneweight}. Condition
\eqref{multiapBasistwoweights} is sufficient for the
characterization of the weak type  estimate  in the classical linear
case with $\cb=\cq$  and also in the multilinear version with the
same basis (cf. \cite{LOPTT}). It is unknown, however,  if it is
also sufficient for other bases. For example, it remains an open
problem whether this condition suffices for
 the basis $\crec$ 
even in the linear case.

The following definition gives a  stronger condition than
\eqref{multiapBasistwoweights} which is quite useful and is often
called {\it power bump condition}.

\begin{definition}\label{bumpcondition}
 We say that the $(m+1)$-tuple of weights $(\nu, \vec w)$  satisfies a {bump
$A_{\vec P,\cb}$ condition} if $\nu \in A_{\infty, \cb}$ and for some $r>1$,
\begin{equation}\label{bumpedap}
\sup_{B\in \cb} \f 1 {|B|} \int _B \nu(x)\, dx \, \prod_{j=1}^m
\left( \f 1 {|B|} \int _Bw_j^{(1-p'_j)r}\,
dx\right)^{\frac{p}{p_j'r}}<\infty\, .
\end{equation}
\end{definition}

This type of power bump condition appeared for the first time in the
work of  Neugebauer \cite{N1} for  $m=1$ and $\cb={\mathcal Q}$, but
with an extra power bump in the weight $\nu$. P\'erez  \cite{P3}
removed the power from  the weight $\nu$ and replaced the power bump
in $w$ by  a {\it logarithmic bump } or a more general type of bump.
Such power bump conditions were then used in \cite{P4} and  \cite{P5},
and in the work of  Cruz-Uribe et al \cite{CMP} to prove very sharp two-weighted estimates for
classical operators.
 For a general $m$ and the basis $\cb=\cq$,
Moen \cite[Theorem~2.8]{moen} obtained that
$\mathcal M_\cq:\,  L^{p_1}(w_1) \times 
\cdots \times L^{p_m}(w_m) \to L^{p}(\nu)$
provided that $1 < p_1, . . . , p_m < \infty$, 
$\f1 p=\f 1 {p_1} + \cdots +\f 1 {p_m}$,
and $(\nu, \vec w)$ satisfy the power bump condition \eqref{bumpedap} for some $r>1$.

We have the following  result concerning $(m+1)$-tuples of weights.

\begin{theorem} \label{twoweights}
Let $\cb$ be a Muckenhoupt  basis and let $\vec P = (p_1,\dots,p_m)$
with $1<p_1,\dots,p_m<\infty$ and $\f1 p=\f 1 {p_1} + \cdots +\f 1 {p_m} $. Let $(\nu, \vec w)$ satisfy the power bump condition
\eqref{bumpedap} for some $r>1$, then
$$\cm_\cb: \,  L^{p_1}(w_1)
\times \cdots \times L^{p_m}(w_m) \to L^{p,\infty}(\nu).$$
\end{theorem}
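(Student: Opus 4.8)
The plan is to reduce the weak-type $(m+1)$-weight bound to a known scalar estimate for the linear maximal operator $M_\cb$ on a Muckenhoupt basis, exploiting the pointwise factorization that is characteristic of $\cm_\cb$. First I would fix $\la>0$ and a tuple $\vec f=(f_1,\dots,f_m)$ with each $f_i\ge 0$, and examine the level set $\Om_\la=\{x:\cm_\cb(\vec f\,)(x)>\la\}$. By definition of $\cm_\cb$, for each $x\in\Om_\la$ there is a basis set $B\ni x$ with $\prod_{i=1}^m\big(\f1{|B|}\int_B f_i\big)>\la$. The first key step is to absorb the weights $w_j^{1-p_j'}$ into the averages: using the power bump hypothesis \eqref{bumpedap} together with H\"older's inequality with exponent $r$, I would estimate, for such a $B$,
$$
\f1{|B|}\int_B f_i\,dx \le \lf(\f1{|B|}\int_B f_i^{p_i}w_i\,dx\r)^{1/p_i}\lf(\f1{|B|}\int_B w_i^{(1-p_i')r}\,dx\r)^{1/(p_i'r)}\lf(\f1{|B|}\int_B 1\,dx\r)^{\,\text{correction}},
$$
the point being that the $r$-bumped reverse factor is exactly what the bump condition controls. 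Multiplying these over $i=1,\dots,m$ and invoking \eqref{bumpedap} yields a bound of the shape
$$
\prod_{i=1}^m\f1{|B|}\int_B f_i\,dx \;\lesssim\; \lf(\f1{\nu(B)}\int_B \Big(\prod_{i=1}^m (M^{(i)}f_i)^{?}\Big)\,\nu\,dx\r)^{?},
$$
so that $\Om_\la$ is covered by basis sets on which a single averaged quantity against the measure $\nu\,dx$ is large.

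The second step is to invoke $\nu\in A_{\infty,\cb}$ and the Muckenhoupt-basis hypothesis. Because $\cb$ is a Muckenhoupt basis and $\nu\in A_{\infty,\cb}$, Theorem~\ref{MuckbasChar} gives that $M_{\cb,\nu}$ is bounded on $L^{q}(\nu)$ for every $q>1$; equivalently $M_{\cb,\nu}$ is of weak type $(1,1)$ with respect to $\nu\,dx$ via interpolation, or one can run a Calder\'on--Zygmund / Vitali-type covering argument directly adapted to $\cb$ using the $A_\infty$ property of $\nu$. Applying this weighted weak-$(1,1)$ estimate to the function obtained in Step~1 controls $\nu(\Om_\la)$ by $\la^{-\text{power}}$ times an integral of a product $\prod_i(g_i)$, where $g_i$ is a suitable power of a scalar maximal function of $f_i$. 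The third step is to unwind that product by H\"older's inequality in the exponents $p/p_i$ and then apply the boundedness of the relevant scalar maximal operator on each $L^{p_i}(w_i)$: this is again legitimate because $w_i\in A_{p_i,\cb}$ (a consequence of the bump condition, since the $r$-bumped condition implies the unbumped one, and projecting \eqref{bumpedap} onto a single factor gives $w_i\in A_{p_i,\cb}$), and $\cb$ is a Muckenhoupt basis, so $M_\cb$ is bounded on $L^{p_i}(w_i)$ by \eqref{mbounded}. Assembling the pieces produces
$$
\nu(\Om_\la)^{1/p}\;\lesssim\;\f1\la\prod_{i=1}^m\norm{f_i}{L^{p_i}(w_i)},
$$
which is exactly the claimed $L^{p_1}(w_1)\times\dots\times L^{p_m}(w_m)\to L^{p,\infty}(\nu)$ bound.

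The main obstacle I expect is Step~1: choosing the exponents in the H\"older splitting so that (a) the bumped reverse-weight averages match \eqref{bumpedap} exactly, and (b) what is left over genuinely reassembles into an average of a controllable scalar quantity against $\nu\,dx$ rather than against Lebesgue measure — in the linear bumped case one uses that $\nu\in A_\infty$ forces $\nu(B)\approx|B|$-type comparability only after passing to a slightly smaller exponent, and in the multilinear case one must be careful that the product of $m$ such splittings still closes. A secondary technical point is that one cannot directly use Theorem~\ref{A} for $\cm_\cb$ itself (there is no \lq\lq dual'' multilinear maximal operator), so the entire argument must be routed through the \emph{linear} operators $M_\cb$ and $M_{\cb,\nu}$, which is precisely why the Muckenhoupt-basis hypothesis (via Theorem~\ref{MuckbasChar}) and the $A_\infty$ assumption on $\nu$ in Definition~\ref{bumpcondition} are indispensable. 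Once the correct exponent bookkeeping in Step~1 is in place, Steps~2 and~3 are routine applications of known weighted norm inequalities.
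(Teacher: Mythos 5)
Your proposal gestures at several of the right ingredients (the bump condition used via H\"older, the $A_\infty$ and Muckenhoupt-basis hypotheses), but it is missing the central combinatorial step and it relies on a false implication in the endgame.

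The most serious gap is that there is no covering or selection argument. You note that $\Om_\la$ is covered by basis sets on each of which the product of averages is large, but you never address how to sum over this badly overlapping family --- and for a basis such as $\crec$, which lacks the engulfing property, this is precisely where the difficulty lies. The paper handles it with a C\'ordoba--Jawerth selection: one extracts a subfamily $\{\widetilde B_j\}$ with $|\widetilde B_k\cap\bigcup_{j<k}\widetilde B_j|<|\widetilde B_k|/2$, so that the sets $E_k=\widetilde B_k\setminus\bigcup_{j<k}\widetilde B_j$ are pairwise disjoint with $|E_k|\approx|\widetilde B_k|$, and one shows that the union of \emph{all} the original $B_j$ lies inside $G=\{x: \cm_\cb(\vec\chi_{\bigcup\widetilde B_j})(x)\geq 2^{-m}\}$. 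Without the scattered family and the disjoint $E_k$'s there is no way to pass from local averages on overlapping sets to a single integral over $\rn$.

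Second, your proposed use of $M_{\cb,\nu}$ is not available: strong $L^q(\nu)$ boundedness for every $q>1$ does \emph{not} yield a weighted weak-$(1,1)$ estimate --- interpolation runs in the other direction, and there is no general extrapolation of that kind (the paper's own reference \cite{tao} discusses exactly such a failure). What the paper actually extracts from $\nu\in A_{\infty,\cb}$ together with the Muckenhoupt-basis hypothesis is far more modest: $\nu\in A_{mq,\cb}$ for some $q>1/m$, so $M_\cb$ is bounded on $L^{mq}(\nu)$ and hence $\nu(G)\lesssim\nu\big(\bigcup\widetilde B_j\big)\lesssim\sum_j\nu(\widetilde B_j)$. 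Third, the assertion that ``projecting \eqref{bumpedap} onto a single factor gives $w_i\in A_{p_i,\cb}$'' is false: the bump condition couples the $w_j$'s to $\nu$, not to each other, and the paper explicitly remarks that $\prod_j A_{p_j}$ is strictly contained in $A_{\vec P}$ even without $\nu$. Fortunately this claim is also unnecessary. The correct H\"older splitting (with exponents $(p_i'r)'$ and $p_i'r$ applied to $f_i w_i^{1/p_i}\cdot w_i^{-1/p_i}$, not the $p_i$, $p_i'$ split in your display) produces, after the bump condition cancels the $\nu$-average and the reverse-weight averages, the quantity $M_\cb\big(|f_i|^{(p_i'r)'}w_i^{(p_i'r)'/p_i}\big)$ integrated over the disjoint $E_j$'s, and the final bound uses only the \emph{unweighted} $L^{p_i/(p_i'r)'}$ boundedness of $M_\cb$ for $p_i/(p_i'r)'>1$; no $A_{p_i}$ condition on $w_i$ enters at all.
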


When applied to the basis $\crec$ and $\nu=\nu_{\vec
w}$, the above theorem gives the following characterization.

\begin{corollary} \label{weakoneweight}

Let $\vec P = (p_1,\dots,p_m)$ with  $1<p_1,\dots,p_m<\infty$ and
$\f1 p=\f 1 {p_1} + \cdots +\f 1 {p_m}$ and let $\vec w$ be an $m$-tuple of
weights. Then
$$\cm_{\crec}: L^{p_1}(w_1) \times \cdots \times L^{p_m}(w_m) \to L^{p,\infty}(\nu_{\vec w})
\qq \mbox{if and only if} \qq  \vec w \in A_{\vec P, \crec}.$$
\end{corollary}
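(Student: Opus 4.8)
The plan is to establish the two implications separately, handling necessity (``only if'') by a testing argument and sufficiency (``if'') by reducing to Theorem~\ref{twoweights}. For necessity, I would assume $\cm_{\crec}: L^{p_1}(w_1)\times\cdots\times L^{p_m}(w_m)\to L^{p,\infty}(\nu_{\vec w})$ with constant $C$, fix $R\in\crec$, and put $\sigma_j=w_j^{1-p_j'}$, so that $w_j=\sigma_j^{1-p_j}$ and hence $\sigma_j^{p_j}w_j=\sigma_j$. Testing the weak-type inequality on $f_j=\sigma_j\chi_R$ --- after the routine truncation $\sigma_j\mapsto\min(\sigma_j,N)$, which makes the test functions lie in $L^{p_j}(w_j)$ via $\int_R\min(\sigma_j,N)^{p_j}w_j\le\int_R\min(\sigma_j,N)$ and, upon letting $N\to\infty$ by monotone convergence, also disposes of the case $\sigma_j(R)=\infty$ --- one has $\|f_j\|_{L^{p_j}(w_j)}^{p_j}=\int_R\sigma_j$ and $\cm_{\crec}(\vec f\,)(x)\ge\prod_{j=1}^m\frac{1}{|R|}\int_R\sigma_j$ for $x\in R$, so the weak estimate gives
\[
\nu_{\vec w}(R)\,\Big(\prod_{j=1}^m\frac{1}{|R|}\int_R\sigma_j\Big)^{p}\le C^{p}\prod_{j=1}^m\Big(\int_R\sigma_j\Big)^{p/p_j}\, .
\]
Dividing by $|R|$ and collecting the powers of $|R|$ with the help of $\sum_{j=1}^m\frac{1}{p_j'}=m-\frac1p$ turns this into $[\vec w\,]_{A_{\vec P,\crec}}\le C^{p}$, and taking the supremum over $R\in\crec$ gives $\vec w\in A_{\vec P,\crec}$.

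For sufficiency, assume $\vec w\in A_{\vec P,\crec}$. I would first observe that the splitting argument of \cite{LOPTT} uses only H\"older's inequality on averages over a single set, hence carries over verbatim to an arbitrary basis; it shows that $\vec w\in A_{\vec P,\crec}$ is equivalent to $\nu_{\vec w}\in A_{mp,\crec}$ together with $\sigma_j=w_j^{1-p_j'}\in A_{mp_j',\crec}$ for all $j=1,\dots,m$. In particular $\nu_{\vec w}\in A_{\infty,\crec}$ and each $\sigma_j$ lies in $A_{q,\crec}$ for some finite $q$, so by the reverse H\"older inequality for the basis $\crec$ --- classical, and obtainable by iterating the one-dimensional reverse H\"older inequality coordinate by coordinate (see \cite{GCRdF}) --- there is $r_j>1$ with $\big(\frac{1}{|R|}\int_R\sigma_j^{r_j}\big)^{1/r_j}\le c_j\,\frac{1}{|R|}\int_R\sigma_j$ uniformly in $R\in\crec$. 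With $r=\min_{1\le j\le m}r_j>1$, inserting these into \eqref{multiapBasisoneweight} upgrades the $A_{\vec P,\crec}$ condition for $\vec w$ to the power bump condition \eqref{bumpedap} for $(\nu_{\vec w},\vec w)$. Since $\crec$ is a Muckenhoupt basis \cite{GCRdF}, Theorem~\ref{twoweights} applied with $\nu=\nu_{\vec w}$ then yields $\cm_{\crec}: L^{p_1}(w_1)\times\cdots\times L^{p_m}(w_m)\to L^{p,\infty}(\nu_{\vec w})$, as wanted.

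The main obstacle is precisely this self-improvement step in the sufficiency direction: passing from the bare $A_{\vec P,\crec}$ condition to the power-bumped condition \eqref{bumpedap} requires a reverse H\"older inequality for $\crec$, in contrast with Theorem~\ref{A}, whose proof is deliberately set up to avoid it. For the particular basis $\crec$ this is harmless, since strong $A_p$ weights do satisfy a reverse H\"older inequality, but the step fails for a general Muckenhoupt basis --- which is exactly why Corollary~\ref{weakoneweight} is stated for $\crec$ and not at the level of generality of Theorem~\ref{twoweights}.
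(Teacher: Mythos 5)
Your proposal is correct and matches the paper's approach: the necessity is precisely the standard testing argument (on $f_j=\sigma_j\chi_R$ with $\sigma_j=w_j^{1-p_j'}$, using $\sigma_j^{p_j}w_j=\sigma_j$ and the usual truncation) that the paper omits as routine, and the sufficiency proceeds exactly as the paper indicates, by extracting $\nu_{\vec w}\in A_{mp,\crec}$ and $\sigma_j\in A_{mp_j',\crec}$ via the H\"older-on-averages argument of \cite{LOPTT}, applying the reverse H\"older inequality for $\crec$ from \cite{GCRdF} to upgrade to the power bump condition \eqref{bumpedap}, and then invoking Theorem~\ref{twoweights}. Your closing observation about why this self-improvement step confines the corollary to $\crec$ rather than a general Muckenhoupt basis is also consistent with the paper's own discussion.
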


The proof of the corollary follows known arguments.  The necessity of
$A_{\vec P, \crec}$ is quite standard and we omit the details. For
the sufficiency we first observe that as in \cite[Theorem
3.6]{LOPTT}, the  vector condition \eqref{multiapBasisoneweight}
implies that $\nu_{\vec w}$ is in the (linear) $A_{mp, _{\crec}}$ class
and $w_i^{1-p_i'}$ is in the
linear $A_{mp_i', _{\crec}}$ class. In fact, the arguments used in
\cite{LOPTT} rely only on the use of H\"older's inequality on the
sets were the averages involved in the various $A_p$ conditions take
place, so the arguments would also apply to any other differentiating
basis. Using the reverse H\"older inequality property of the basis
of rectangles (see the book by Garc\'ia-Cuerva and Rubio de Francia \cite{GCRdF} p.\,458)
we can now ``bump" the weights $w_i^{1-p_i'}$ and apply  Theorem \ref{twoweights}.

We observe that the case $p_1=\cdots=p_m=1$ is excluded in the statements of Theorem~\ref{twoweights} and
Corollary~\ref{weakoneweight}.  The problem for  this endpoint case
remains open except for the unweighted case, which will be
considered in Section \ref{endpoint} below.

Finally, we remark that  the proof
of the Theorem \ref{twoweights} given in Section \ref{two-weight}, yields
that the $A_\infty$ condition assumed on $\nu$ can be replaced by
the weaker condition (A) given in Definition \ref{conditionA}.

\subsection{The case of $m$-weights:  strong bounds for the strong maximal
function.} \label{strmaxstrbound}

\

A characterization of the strong type bounds for the strong maximal function is possible in this case and we obtain the following.%
\begin{theorem} \label{1wghtStrChar}
Let $1<p_j<\infty$, $j=1,\dots,m$ \,and\,
$\frac{1}{p}=\frac{1}{p_1}+\dots+\frac{1}{p_m}$.\, Then
$$
\cm_{\crec}: \,   L^{p_1}(w_1) \times \cdots \times L^{p_m}(w_m) \to
L^{p}(\nu_{\vec w}) \qq \mbox{if and only if} \qq \vec w \in A_{\vec
P, \crec}.
$$
\end{theorem}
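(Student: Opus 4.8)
The plan is to treat the two implications separately. The necessity of $\vec w \in A_{\vec P, \crec}$ is the easy direction: assuming $\cm_{\crec}$ maps $L^{p_1}(w_1)\times\cdots\times L^{p_m}(w_m)\to L^{p}(\nu_{\vec w})$, I would test the inequality on tuples of the form $f_i = w_i^{1-p_i'}\mathbf 1_R$ for an arbitrary rectangle $R\in\crec$. Restricting the supremum defining $\cm_{\crec}$ to the single rectangle $R$ and evaluating at $x\in R$ gives a pointwise lower bound $\cm_{\crec}(\vec f\,)(x)\ge\prod_{i=1}^m\bigl(\tfrac1{|R|}\int_R w_i^{1-p_i'}\bigr)$; integrating against $\nu_{\vec w}$ over $R$ and comparing with the product of the norms $\|f_i\|_{L^{p_i}(w_i)}=\bigl(\int_R w_i^{1-p_i'}\bigr)^{1/p_i}$ yields exactly \eqref{multiapBasisoneweight} after rearranging the exponents (using $\f1p=\sum\f1{p_i}$ and $\f p{p_i'}=\f p{p_i}\cdot\f{p_i}{p_i'}$ bookkeeping). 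This is entirely standard, so I would state it briefly and move on.

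For the sufficiency — the substantial direction — the strategy is \emph{not} to interpolate from the weak-type bound of Corollary \ref{weakoneweight} (the introduction explicitly warns this fails in the multilinear setting), but rather to prove the strong bound directly. The key structural observation, exactly as in \cite[Theorem 3.6]{LOPTT} and recalled in the discussion after Corollary \ref{weakoneweight}, is that the vector condition $\vec w\in A_{\vec P,\crec}$ forces, via H\"older's inequality on the averaging sets, that $\nu_{\vec w}\in A_{mp,\crec}$ and that each $\sigma_i := w_i^{1-p_i'}\in A_{mp_i',\crec}$ (linear $A_q$ classes for the rectangle basis). Since $\crec$ is a Muckenhoupt basis and these are linear $A_\infty$-type weights for $\crec$, one has the reverse H\"older property (García-Cuerva–Rubio de Francia \cite{GCRdF}, p.~458): there is $r>1$ so that each $\sigma_i$ satisfies $\bigl(\tfrac1{|R|}\int_R\sigma_i^r\bigr)^{1/r}\lesssim\tfrac1{|R|}\int_R\sigma_i$ uniformly over $R\in\crec$. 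Then the bumped tuple $(\nu_{\vec w},\vec w)$ satisfies a power bump condition \eqref{bumpedap}, so Theorem \ref{twoweights} already yields the \emph{weak} $(p,\infty)$ bound; the task is to upgrade this to a strong bound.

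To get the strong bound I would run the Jawerth–Torchinsky/Jawerth machinery announced in the introduction (adapted from \cite{JT} and \cite{J}), which the paper develops in Section \ref{oneweightsection}. The scheme is: perform a Calderón–Zygmund-type decomposition adapted to the rectangle basis at heights $2^k$, obtaining a family of rectangles $\{R_{k,j}\}$ covering $\{\cm_{\crec}(\vec f\,)>2^k\}$ up to the next level, with control on overlaps coming from the reverse-Hölder/$A_\infty$ properties established above; then estimate $\int (\cm_{\crec}\vec f\,)^p\,\nu_{\vec w}$ by summing $2^{kp}\nu_{\vec w}(\{\cm_{\crec}\vec f>2^k\})$ and, on each selection rectangle, replace $\nu_{\vec w}(R_{k,j})$ by a controlled proportion of $\sigma_i(R_{k,j})$ via the $A_{\vec P,\crec}$ inequality, finally summing a geometric-type series using that $\sigma_i\in A_\infty(\crec)$ to absorb the overlaps. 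The main obstacle — and the reason this is not a formality — is precisely the failure of the engulfing property for rectangles, emphasized in the introduction: the Calderón–Zygmund selection of rectangles does not produce a nicely nested or boundedly-overlapping family for free, and one must use the Córdoba–Fefferman-type covering refinements together with the multiple-weight $A_\infty$ information to recover enough quasi-orthogonality of the selected rectangles. Once that combinatorial/geometric input is in place, the exponential summation is routine. I would therefore organize the proof so that all the weight-theoretic reductions ($A_{\vec P}\Rightarrow$ linear $A_q$ for $\nu_{\vec w}$ and $\sigma_i$, plus reverse Hölder) are done first and cleanly, isolating the genuinely hard covering lemma for rectangles as the single technical heart of the argument in Section \ref{oneweightsection}.
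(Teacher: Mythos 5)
Your plan is essentially the paper's plan in Section \ref{oneweightsection}: necessity via the standard test functions $f_i=w_i^{1-p_i'}\chi_R$, and sufficiency by reducing with H\"older and reverse H\"older (so that $\nu_{\vec w}\in A_{mp,\crec}$ and each $\sigma_i=w_i^{1-p_i'}\in A_{mp_i',\crec}$ admit a power bump) and then running a Jawerth--Torchinsky style direct strong-type argument by summing $2^{kp}\nu_{\vec w}(\{\cm_{\crec}\vec f>2^k\})$. The one attribution worth sharpening: the covering/selection device the paper actually deploys here is \emph{not} a C\'ordoba--Fefferman exponential-integrability covering lemma (that appears only in the unweighted endpoint Theorem \ref{BSMF}), but Jawerth's $\alpha$-scattered family lemma (Lemma \ref{scatteredproperty}), which is an abstract selection lemma valid for any basis once the weight satisfies condition~(A) from Definition~\ref{conditionA}. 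Moreover, the absorption of the overlap terms is not obtained by merely quoting $\sigma_i\in A_\infty(\crec)$; it comes from distributing the rectangles at levels $N, N-\mu, N-2\mu,\dots$ over $\mu$ interleaved sequences, applying the scattered lemma to each, and then choosing $\mu$ so large that the resulting factor $c\,2^{-p\mu}\le\tfrac12$ lets one subtract $\sum_k 2^{kp}\nu_{\vec w}(\Omega_{k+\mu})$ from the left-hand side. With those two clarifications your outline coincides with the paper's proof.
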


\section{The unweighted endpoint estimates for the  multilinear strong maximal function}  \label{endpoint}

In this section we focus attention on unweighted endpoint properties
of the strong multilinear maximal function $\cm_{\crec}$.

As mentioned in the introduction,   \eqref{e3} and   H\"older's
inequality   yield  that $\cm_{\crec}$ is bounded from $L^{p_1}(\rn)
\times  \dots \times L^{p_m}(\rn)$ to $L^p(\rn)$ whenever $1<p_1,
\dots, p_m \le\nf$ and  $\f1p=\f1{p_1}+ \dots +\f1{p_m}$.  This
argument does not apply to the endpoint case $L^1\times \dots \times
L^1$ since the operator $M_{\crec}$ is not of weak type $(1,1)$.

 We have also   mentioned  a substitute to the weak type $(1,1)$ endpoint estimate
for  $M_{\crec}$  obtained by Jessen, Marcinkiewicz and Zygmund
\cite{jmz},   who showed that  that for all $f $ on $\rn$   
\begin{equation}\label{e1}
|\{x\in\rn:\, M_{\crec}(f)(x)>\la\}| \le C_n\int_{\rn}
\Phi_n\lf(\f{|f(x)|}{\la}\r)\,dx,\,  
\end{equation}
 where  constant $C_n$ is a constant depending
only on the dimension $n$ and 
for $ t>0$%
$$
\Phi_n(t)=t(1+(\log^+t)^{n-1}) \approx  t(\log(e+t))^{n-1}.
 $$
Later, C\'orboda and Fefferman \cite{cf} gave a geometric proof of
\eqref{e1} answering a question formulated by Zygmund.

These kind of distributional estimates have also appeared in other
works in the literature, for instance in \cite{P6} for $M^2$ (the
composition of the maximal function with itself)  and for
commutators. They are interesting because they provide good endpoint
estimates for purposes of interpolation.

As in the linear case, we should not expect weak type estimates when
 $p_i=1$ for all $i$, but rather a distributional estimate involving the
function $\Phi_n$.  Moreover, we will need to consider compositions
of the function $\Phi_n$ with itself.  In general, for a given $m\in\nn$ and
$\Phi$ increasing, we set
$$\Phi^{(m)}:=
\overbrace{\Phi\circ \Phi \circ\cdots\circ\Phi}^{m \mbox{ times
}}.$$
Then $\Phi^{(m)}$ is an increasing function and is also increasing
with respect to $m$. In the special case of $\Phi=\Phi_n$ we will
use the following estimate: there exists a positive constant
$C_{n,m}$ which depends on $n$ and $m$ such that for $t>0$,
\begin{eqnarray}\label{Phinm}
\Phi_n^{(m)}(t)\le C_{n, m} t [\log(e+t)]^{m(n-1)}.
\end{eqnarray}

\begin{theorem}\label{BSMF} There exists a positive
constant $C$ depending only on $m$ and $n$ such that for all $
\la>0$,
$$
\q\lf|\lf\{x\in\rn:\, \cm_{\crec}(\vec f\,)(x)>\la^m\r\}\r| \le C
\lf\{\prod_{i=1}^m\int_{\rn} \Phi_n^{(m)}
\lf(\f{|f_i(x)|}{\la}\r)\,dx\r\}^{1/m}
$$
 for all  $ f_i $ on $\rn$ 
  and  for all $i=1,\dots , m$.
Furthermore,  the theorem is sharp in the sense that we cannot
replace $\Phi_n^{(m)}$ by $\Phi_n^{(k)}$ for $k\leq m-1$.
\end{theorem}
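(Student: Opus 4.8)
The plan is to reduce the multilinear distributional estimate to the linear Jessen--Marcinkiewicz--Zygmund inequality \eqref{e1} applied $m$ times. First I would use the pointwise bound \eqref{e3}, namely $\cm_{\crec}(\vec f\,)\le\prod_{i=1}^m M_{\crec}(f_i)$, to observe that if $\cm_{\crec}(\vec f\,)(x)>\la^m$ then there must exist some index $i$ with $M_{\crec}(f_i)(x)>\la$. Hence
\[
\bigl\{x:\cm_{\crec}(\vec f\,)(x)>\la^m\bigr\}\subseteq\bigcup_{i=1}^m\bigl\{x:M_{\crec}(f_i)(x)>\la\bigr\},
\]
so by subadditivity of Lebesgue measure and then \eqref{e1},
\[
\bigl|\{x:\cm_{\crec}(\vec f\,)(x)>\la^m\}\bigr|\le\sum_{i=1}^m\bigl|\{M_{\crec}(f_i)>\la\}\bigr|\le C_n\sum_{i=1}^m\int_{\rn}\Phi_n\Bigl(\tf{|f_i(x)|}{\la}\Bigr)\,dx.
\]
This already gives a distributional estimate, but with a \emph{sum} of $\Phi_n$-modulars and only one iterate of $\Phi_n$; to get the stated \emph{geometric mean} of $\Phi_n^{(m)}$-modulars I would need a genuinely multilinear argument rather than this crude union bound.

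The correct approach, I expect, is an induction on $m$ in the spirit of the iteration of maximal functions. One writes $\cm_{\crec}(f_1,\dots,f_m)(x)\le M_{\crec}(f_1)(x)\cdot\cm_{\crec}(f_2,\dots,f_m)(x)$ (or more symmetrically, peels off one factor at a time), and then the level set $\{\cm_{\crec}(\vec f\,)>\la^m\}$ is contained in $\{M_{\crec}(f_1)>\la\}\cup\{\cm_{\crec}(f_2,\dots,f_m)>\la^{m-1}\}$. Applying \eqref{e1} to the first set and the inductive hypothesis to the second produces, after $m$ steps, a bound of the form $C\sum_{i=1}^m\int_{\rn}\Phi_n^{(j_i)}(|f_i|/\la)\,dx$ where the number of iterates $j_i$ grows as one goes deeper into the induction. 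To symmetrize — so that every factor carries the same iterate $\Phi_n^{(m)}$ — one uses that $\Phi_n^{(k)}\le\Phi_n^{(m)}$ for $k\le m$ (monotonicity of the iterates, as noted in the text after \eqref{Phinm}), and then replaces the arithmetic mean by the geometric mean via the AM--GM inequality $\sum a_i\ge m(\prod a_i)^{1/m}$ — but in the \emph{wrong} direction, so in fact one must first split the level parameter. The cleanest device is: scale so that each $\int\Phi_n^{(m)}(|f_i|/\la)\,dx$ equals a common value $A$ after choosing $\la$ appropriately on each factor, or equivalently use homogeneity to balance the factors and conclude $\min_i\int\Phi_n^{(m)}(|f_i|/\la)\,dx\le(\prod_i\int\Phi_n^{(m)}(|f_i|/\la)\,dx)^{1/m}$, which is what upgrades the sum into the product-to-the-$1/m$.

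The main obstacle will be getting the \emph{sharp} number of iterates, i.e.\ exactly $\Phi_n^{(m)}$ and not something larger like $\Phi_n^{(2m)}$; the naive composition of $m$ copies of \eqref{e1} tends to overcount because each application of the JMZ inequality to $\cm_{\crec}(f_j,\dots,f_m)$ would feed a function $M_{\crec}(f_{j+1})\cdots$ into $\Phi_n$, producing one extra $\Phi_n$ per step per surviving variable. The key realization is that one should \emph{not} iterate \eqref{e1} on the already-maximal functions; instead one applies \eqref{e1} once to each $M_{\crec}(f_i)$ separately and controls the interaction through the product structure and the superlevel-set decomposition, so that only the combinatorics of distributing $\la^m$ among $m$ factors (each contributing a threshold $\la$) enters. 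Concretely, the bound $\Phi_n^{(m)}$ arises not from composing the weak estimate $m$ times but from the fact that estimating $\|M_{\crec}f\|$ in the Orlicz-type sense $L(\log L)^{n-1}$ and iterating that embedding $m$ times costs $(n-1)m$ logarithms, matching \eqref{Phinm}: $\Phi_n^{(m)}(t)\approx t[\log(e+t)]^{m(n-1)}$. For the sharpness claim I would construct, as in the classical lower-bound examples for the strong maximal function, a test function — a sum of characteristic functions of well-separated thin rectangles, or a single $\chi_{[0,1]^n}$-type bump whose iterated maximal functions exhibit the $[\log]^{m(n-1)}$ growth — and show that replacing $\Phi_n^{(m)}$ by $\Phi_n^{(m-1)}$ makes the right-hand side finite while the left-hand side is not, for a suitable sequence of functions and levels $\la$.
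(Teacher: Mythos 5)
Your plan has a genuine gap: neither the union bound nor the ``peel one factor off at a time'' induction can produce a geometric mean of modulars, and the repair you sketch (balancing the level among factors, or ``$\min \le$ geometric mean'') does not close the argument. The union bound gives a \emph{sum} $\sum_i \int \Phi_n(|f_i|/\la)\,dx$, and the inductive peeling $\{\cm_{\crec}(\vec f\,) > \la^m\} \subseteq \{M_{\crec}(f_1) > \la\} \cup \{\cm_{\crec}(f_2,\dots,f_m) > \la^{m-1}\}$ also produces a \emph{sum} of two quantities at each step. Since the geometric mean is dominated by the arithmetic mean (not the other way around), a bound by a sum of $m$ modulars is strictly \emph{weaker} than the stated bound by the geometric mean of $m$ modulars; you cannot upgrade the former to the latter by AM--GM, by rescaling, or by choosing $\la$ differently on each factor (the modulars $\int\Phi_n^{(m)}(|f_i|/\la)\,dx$ do not scale like powers of $\la$, so the usual balancing trick of multilinear interpolation is unavailable). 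In short, the sum-type estimates you derive are simply a different, weaker conclusion.

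The paper's mechanism is genuinely different and uses two ingredients that your sketch does not identify. First, the JMZ inequality \eqref{e1} is not applied; instead the C\'ordoba--Fefferman selection lemma (in Bagby's quantitative form) produces a sparse subfamily $\{\widetilde R_j\}$ of the covering rectangles satisfying an \emph{exponential overlap bound} $\int_{\widetilde E} \exp\bigl(\de \sum_j \chi_{\widetilde R_j}\bigr)^{1/(n-1)} \le 2|\widetilde E|$, i.e.\ $\bigl\|\sum_j \chi_{\widetilde R_j}\bigr\|_{\Psi_n,\widetilde E} \lesssim 1$. Second --- and this is where the geometric mean comes from --- one sums the inequalities $1 < \prod_i |R_j|^{-1}\int_{R_j} f_i$ over the selected rectangles and applies H\"older \emph{across the index $j$}, giving $|\widetilde E| \le \sum_j |\widetilde R_j| \le \bigl(\prod_i \int_{\widetilde E}(\sum_j \chi_{\widetilde R_j})\,f_i\,dx\bigr)^{1/m}$; this step converts an arithmetic sum over rectangles into a geometric mean over the $m$ functions. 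Pairing with the overlap function in the Orlicz dual $(\Phi_n,\Psi_n)$ then yields $1 \lesssim \prod_i \|f_i\|_{\Phi_n,\widetilde E}$. Finally, the $m$ iterates $\Phi_n^{(m)}$ arise \emph{not} from composing weak-type estimates $m$ times, but from the key Lemma~\ref{keylemma}: when a product of $m$ Luxemburg norms exceeds $1$, it is dominated by $c\,\prod_i |E|^{-1}\int_E \Phi^{(m)}(f_i)\,dx$; the proof of that lemma is an induction that applies $\Phi$ once per factor via submultiplicativity and Jensen. That combinatorial lemma, not iteration of the strong maximal function, is what accounts for $\Phi_n^{(m)}$. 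Your sharpness construction is in the right spirit (the paper uses $f=\chi_{[0,1]^2}$ and $g_N = N\chi_{[0,1]^2}$), but the core affirmative argument as you have outlined it would not establish the theorem.
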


We present the proof of this result in Section
\ref{strongmaximalthm}.

\section{Proof of the weak type estimate in the $(m+1)$-weight case} \label{two-weight}

\begin{proof}[ \textbf{Proof of Theorem \ref{twoweights}.} ]
By homogeneity it is enough to show that
\begin{equation}\label{weaktype}
\nu \left( \{ x\in \rn: \cm_\cb(\vec f \, ) (x) > 1\} \right)^{1/p}
 \lesssim \prod_{i=1}^m \left( \int_{\rn} |f_i|^{p_i}w_i \,dx \right)^{1/p_i}.
 \end{equation}
Moreover  it is enough to prove \eqref{weaktype} uniformly for any
compact set $K$ contained in the set $\{ x\in \rn: \cm_\cb(\vec f ) (x) >
1\}$.

Given such a compact set $K$ we can cover it with a finite
collection of sets $\{B_j\}_{j=1}^N$ in $\cb$ such that
\begin{equation*}
 \prod_{i=1}^m   \f 1 {|B_j|} \int _{B_j} |f_i| \, dy >1
 \end{equation*}
for all $j$. As in \cite{P2} we follow   a well-known selection
procedure (see for instance \cite{GCRdF} p.\,463,  \cite{C}, or
\cite{J}). Then we   extract a subfamily $\{\widetilde
B_j\}_{j=1}^{M}$, selected in such a way that $\widetilde B_1 =B_1$,
\begin{equation} \label{small}
\left| \widetilde B_k \bigcap \bigcup_{j=1}^{k-1} \widetilde B_j
\right| < |\widetilde B_k|/2,
\end{equation}
and if $B_l$ is not in the subfamily  $\{\widetilde B_j\}_{j=1}^{M}$
then
\begin{equation} \label{large}
\left| B_l \bigcap \bigcup_{j=1}^{M} \widetilde B_j \right| \geq
|B_l|/2.
\end{equation}
Note that if we define
$$
F_k = \widetilde B_k \bigcap \bigcup_{j=1}^{k-1} \widetilde B_j,$$
then the set
$$
E_k= \widetilde B_k \setminus F_k$$
 satisfies
\begin{equation}\label{comparable}
|E_k| \approx |\widetilde B_k |,
\end{equation}
for all $k$ and the sets  $\{E_j\}_{j=1}^{M}$ are pairwise disjoint.

We now claim that
\begin{equation}\label{claimstrong}
\bigcup_{j=1}^N B_j \subset \left\{ x \in \rn: \cm_\cb \left( \vec
\chi_{\bigcup_{j=1}^M \widetilde B_j}\right)(x) \geq 2^{-m}
\right\},
\end{equation}
where
$$\vec \chi_{\bigcup_{j=1}^M \widetilde B_j} = (\chi_{\bigcup_{j=1}^M
\widetilde B_j} ,\dots, \chi_{\bigcup_{j=1}^M \widetilde B_j} ).$$
Assume the claim for a moment and set
$$
\left\{ x \in \rn: \cm_\cb \left( \vec \chi_{\bigcup_{j=1}^M
\widetilde B_j}\right)(x) \geq 2^{-m} \right\} = G .
$$
Since  $\nu  \in A_{\infty,\cb}$, $\nu$ is in   $A_{mq,\cb}$
  for some  $q>1/m$. We have
\begin{eqnarray*}
\nu (G) & \leq &
  2^{mq}  \int_{\rn}  \left( \prod_{i=1}^m M_\cb (\chi_{\bigcup_{j=1}^M \widetilde B_j})\right)^q\nu(x) \, dx\\
& \lesssim &
 \int_{\rn}   \left( M_\cb (\chi_{\bigcup_{j=1}^M \widetilde B_j})\right)^{mq}\nu(x) \, dx\\
& \lesssim &
 \int_{\rn}   \left( \chi_{\bigcup_{j=1}^M \widetilde B_j}\right)^{mq}\nu(x) \, dx\\
& \lesssim & \nu \bigg(\bigcup_{j=1}^M \widetilde B_j\bigg),
 \end{eqnarray*}
and from \eqref{claimstrong} it follows that
$$
\nu (K) \leq \nu \bigg(\bigcup_{j=1}^M  B_j\bigg) \lesssim\nu \bigg(\bigcup_{j=1}^M \widetilde B_j\bigg) \lesssim \sum_{j=1}^M  \,    \nu (\widetilde B_j)
\, .
$$
 Now using the bump  condition on the weights with some $r>1$, we can estimate the measure of the   expression on the right above
as follows: 
\begin{eqnarray*}
& &  \sum_{j=1}^M  \,    \nu (\widetilde B_j)\\
& \le  & \sum_{j=1}^M  \, \left(  \prod_{i=1}^m   \f 1 {|\widetilde
B_j|} \int _{\widetilde B_j} |f_i| \, dx \right)^p\, \,
 \nu (\widetilde B_j)\\
& \lesssim & \sum_{j=1}^M  \prod_{i=1}^m   \left(  \f 1 {|\widetilde
B_j|}
 \int _{\widetilde B_j} |f_i|^{(p_i'r)'}  w_i^{(p_i'r)'/p_i} \, dx \right)^{p/(p_i'r)'}
  \prod_{i=1}^m   \left(  \f 1 {|\widetilde B_j|}
 \int _{\widetilde B_j}  w_i^{-p_i'r/p_i} \, dx \right)^{p/p_i'r}
 \nu (\widetilde B_j)\\
 & \lesssim &
\sum_{j=1}^M  \prod_{i=1}^m   \left(  \f 1 {|\widetilde B_j|}
 \int _{\widetilde B_j} |f_i|^{(p_i'r)'}  w_i^{(p_i'r)'/p_i} \, dx \right)^{p/(p_i'r)'}
 |\widetilde B_j|, 
 \end{eqnarray*}
 in view of \eqref{bumpedap}.
Finally using \eqref{comparable}, H\"older's inequality with $\sum
\frac{p}{p_i} =1$,   the fact that $p_i/(p_i'r)'>1$,  we   continue
the preceding  sequence of inequalities as follows:
\begin{eqnarray*}
& \lesssim & \sum_{j=1}^M  \int _{ E_j}   \prod_{i=1}^m
\left( M_\cb \left( |f_i|^{(p_i'r)'}  w_i^{(p_i'r)'/p_i} \right) \right)^{p/(p_i'r)'} \, dx \\
 & \lesssim &
 \int _{ \rn}   \prod_{i=1}^m
\left( M_\cb \left( |f_i|^{(p_i'r)'}  w_i^{(p_i'r)'/p_i}\right) \right)^{p/(p_i'r)'} \, dx\\
& \lesssim & \prod_{i=1}^m   \left(  \int _{ \rn}
\left( M_\cb \left( |f_i|^{(p_i'r)'}  w_i^{(p_i'r)'/p_i}\right) \right)^{p_i/(p_i'r)'} \, dx \right)^{p/p_i}\\
& \lesssim & \prod_{i=1}^m   \left(  \int _{ \rn} |f_i|^{p_i} w_i\,
dx \right)^{p/p_i},
\end{eqnarray*}
which gives the desired weak type estimate.

It only remains to verify \eqref{claimstrong}. To do so, fix $x$ in
$\bigcup_j B_j$. If $x$ is in one of the sets  $\widetilde B_k$,
then  $x$ is in $G$ because
$$
\cm_\cb \left( \vec \chi_{\bigcup_{j=1}^M \widetilde B_j}\right)(x)
\geq \prod_{i=1}^m \f 1 {|\widetilde B_k|} \int_{\widetilde B_k}
\chi_{\widetilde B_k}(y)\,dy = 1.
$$
On the other hand, if $x \notin \bigcup_j \widetilde B_j$, then $x$
is in some $B_k$ satisfying \eqref{large}. It follows then that
$$
\cm_\cb \left( \vec \chi_{\bigcup_{j=1}^M \widetilde B_j}\right)(x)
\geq \prod_{i=1}^m \f 1 {|B_k|} \int_{ B_k} \chi_{\bigcup_{j=1}^M
\widetilde B_j}(y)\,dy = \prod_{i=1}^m \f 1 {|B_k|}  \left|B_k
\bigcap \bigcup_{j=1}^M \widetilde B_j \right| \geq 2^{-m},
$$
and so $x$ is also in $G$. The proof of the theorem is now complete.
\end{proof}

\section{The strong maximal function and the $m$-weight case} \label{oneweightsection}

The purpose of this section is prove Theorem \ref{1wghtStrChar}
concerning the multilinear strong maximal operator $\cm_{\crec}$
related to the basis $\crec$.  To do this we need a special case of
a lemma from \cite{J}  and some
 additional 
definitions for general
basis.  The first definition concerning the concept of {\it
$\alpha$-scattered families}  is of geometric nature  and  plays an
important role in this context. It has been considered in the works
\cite{JT} and \cite{J} and implicitly in \cite{C} and \cite{cf}.

\begin{definition}

Let $\cb$ be a basis and let $0<\alpha<1$.  A finite sequence
$\{\tilde{A_i}\}_{i=1}^{M} \subset\cb$ of sets of finite
$dx$-measure is called $\alpha$--scattered with respect to the
Lebesgue  measure if
$$
 \left|\tilde{A_i}\bigcap \bigcup_{s<i}\tilde{A_s}\right|\leq \alpha |\tilde{A_i}|
$$
for all $1<i\leq M$.
\end{definition}

Next, we will be considering an important large class of weights.

\begin{definition} \label{conditionA}
Let $\cb$ be a basis and let $w$ be a weight associated to this
basis.  We say that $w$ satisfies condition $(A)$ if there are
constants $0 < \lambda < 1$, $0 <   c(\lambda) < \infty$ such that
for all measurable sets $E$ we have
$$
w \left( \{ x \in \rrr^{n} : M_{\cb}(\chi_{E})(x)
> \lambda \} \right) \le c(\la)\, w(E). \leqno(A)
$$
\end{definition}

This class of weights was also considered in \cite{JT} and further
exploited in \cite{J}. However, for the Lebesgue measure
it goes back to the work in \cite{C} and has been recently
considered again by Hagelstein and  Stokolos in their  articles
\cite{HS1}, \cite{HS2}.

One of the reasons  that   condition (A) is interesting is the fact
that it is weaker than the $A_{\infty, \cb}$ condition whenever the
basis $\cb$ is a Muckenhoupt basis. Indeed, if $w\in A_{\infty,
\cb}$,  then  $w\in A_{p, \cb}$ for some $p>1$ large enough. Then
for any measurable set $E$ we have
$$
w\left( \{ x \in \rrr^{n} : M_{\cb}(\chi_{E})(x)
> \lambda \} \right)
\leq \frac{1}{\la^p}
\int M_{\cb}(\chi_{E})(x)^p\, w(x)dx\\
\le c({\la})\, w(E),
$$
since $w\in A_{p, \cb}$ and the basis $\cb$ is a Muckenhoupt basis.
It follows that
 $w$ satisfies condition (A).

\begin{lemma}\label{scatteredproperty}
Let $\cb$ be a basis and let $w$ be a weight associated to this
basis. Suppose further that $w$ satisfies condition $(A)$   for some
$0<\la<1$  and $0<c(\la)<\infty$. Then  given any finite
sequence $\{A_i\}_{i=1}^{M}$ of sets  $A_i\in \cb$,\\

1) we can find a subsequence $\{\tilde{A_i}\}_{i\in I}$ of
$\{A_i\}_{i=1}^{M}$ which is  $\la$-scattered with respect to the Lebesgue measure; \\

2) $\tilde{A_i}= A_i, \,  i\in I$; \\

3) for any $1\leq i<j\leq M+1$
\begin{equation}\label{thirdscattproperty}
w\Big(\bigcup_{s<j}A_s\Big)\leq   c(\la)
\,\Big[w\Big(\bigcup_{s<i}A_s\Big) + w\Big(\bigcup_{i\leq
s<j}\tilde{A_{s}}\Big)   \Big],
\end{equation}
where $\tilde{A_{s}}=\emptyset$ when $s\notin I$.
\end{lemma}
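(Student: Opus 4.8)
The idea is to build the $\lambda$-scattered subsequence greedily, deciding inductively which indices enter the index set $I$, and to read off property (3) along the way from the defining inequality of condition $(A)$. First I would set up the selection. Go through the sets $A_1, A_2, \ldots, A_M$ in order; put $1 \in I$ and $\tilde A_1 = A_1$. Having decided membership for indices $< i$, include $i$ in $I$ and set $\tilde A_i = A_i$ precisely when
$$
\Bigl| A_i \cap \bigcup_{s < i} \tilde A_s \Bigr| \le \lambda\, |A_i|,
$$
where as usual $\tilde A_s = \emptyset$ for $s \notin I$; otherwise leave $i$ out of $I$ and set $\tilde A_i = \emptyset$. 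By construction the resulting subsequence $\{\tilde A_i\}_{i \in I}$ is $\lambda$-scattered with respect to Lebesgue measure, which is (1), and (2) holds by definition of $\tilde A_i$ for $i \in I$. So the only real content is (3).

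\textbf{The key step: deriving (3).} The mechanism is that an index $i$ that is \emph{omitted} from $I$ is, by the failure of the selection inequality, almost entirely engulfed by the union of the previously selected sets:
$$
\Bigl| A_i \setminus \bigcup_{s < i,\, s \in I} \tilde A_s \Bigr| < (1 - \lambda)\, |A_i|,
\qquad \text{i.e.}\qquad
|A_i| < \frac{1}{1-\lambda}\, \Bigl| A_i \cap \bigcup_{s < i} \tilde A_s \Bigr|.
$$
I would first prove the one-step version of (3): for a single omitted index $i$, $A_i$ is contained in $\{x : M_{\cb}(\chi_{\bigcup_{s<i} \tilde A_s})(x) > \lambda\}$ up to the part already inside $\bigcup_{s<i}\tilde A_s$ — more precisely, every point of $A_i$ either lies in $\bigcup_{s<i}\tilde A_s$ or satisfies $M_{\cb}(\chi_{\bigcup_{s<i}\tilde A_s})(x) \ge |A_i \cap \bigcup_{s<i}\tilde A_s|/|A_i| > \lambda$ (using $A_i$ itself as the competing set in the supremum defining $M_\cb$). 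Hence for any $j$,
$$
\bigcup_{s < j} A_s \;\subset\; \Bigl(\bigcup_{s<j,\, s\in I}\tilde A_s\Bigr) \;\cup\; \Bigl\{x: M_\cb\bigl(\chi_{\bigcup_{s<j,\,s\in I}\tilde A_s}\bigr)(x) > \lambda\Bigr\},
$$
and applying condition $(A)$ to the set $E = \bigcup_{s<j,\,s\in I}\tilde A_s$ gives
$$
w\Bigl(\bigcup_{s<j} A_s\Bigr) \le w(E) + c(\lambda)\, w(E) \le \bigl(1 + c(\lambda)\bigr)\, w\Bigl(\bigcup_{i \le s < j}\tilde A_s\Bigr) + \bigl(1+c(\lambda)\bigr)\, w\Bigl(\bigcup_{s<i}\tilde A_s\Bigr),
$$
after splitting $E$ at the cutoff $i$ and noting $\bigcup_{s<i,\,s\in I}\tilde A_s \subset \bigcup_{s<i} A_s$. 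Absorbing constants (replace $c(\lambda)$ by $1+c(\lambda)$ if needed, which is harmless) yields exactly \eqref{thirdscattproperty}. Note the case $i=1$ is consistent since the empty union has $w$-measure $0$, and the case $j = M+1$ is included by allowing $j$ to range up to $M+1$ in the argument above.

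\textbf{Main obstacle.} The argument is not deep, but the point requiring care is the bookkeeping in the inclusion for $\bigcup_{s<j}A_s$: one must handle uniformly both the selected indices (where $A_s = \tilde A_s$ and there is nothing to do) and the omitted ones (where the engulfing estimate applies), and one must be sure to invoke $M_\cb$ only with respect to the union of \emph{previously} selected sets so that the monotonicity in $j$ is respected; using $\bigcup_{s<j,\,s\in I}\tilde A_s$ rather than $\bigcup_{s<i,\,s\in I}\tilde A_s$ inside the maximal function is the correct choice since membership of $i$ was decided against that larger union. The only other subtlety is to make sure the constant in (3) can legitimately be taken equal to the $c(\lambda)$ from condition $(A)$; if one prefers, one simply redefines $c(\lambda)$ at the start of the lemma to be $\max(1, c(\lambda))$ or $1 + c(\lambda)$, which changes nothing in the applications. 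Everything else is a routine splitting of unions and monotonicity of $w$.
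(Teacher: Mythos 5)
Your proof is correct and follows essentially the same route as the paper's: the same greedy selection to build the $\lambda$-scattered subsequence, the same engulfing observation for discarded indices, and the same invocation of condition $(A)$ applied to $E=\bigcup_{s<j}\tilde A_s$. The only cosmetic difference is that the paper notes that $\bigcup_{s<j}\tilde A_s$ is itself contained in $\{M_\cb(\chi_E)>\lambda\}$ (each selected set contributes average $1>\lambda$), so the entire union $\bigcup_{s<j}A_s$ sits inside that superlevel set and one gets the constant $c(\lambda)$ directly rather than your $1+c(\lambda)$; as you observe, this is immaterial.
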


The proof of this result is adapted from p. 370 Lemma 1.5 \cite{J}.

\begin{proof}
We let $\tilde{A_1}= A_1$\, be the first element of the subsequence.
Suppose now that\, $\tilde{A_1}, \tilde{A_{i_2}}, \cdots,
\tilde{A_{i_{l-1}}}$\,  have been already selected. Then
$\tilde{A_{i_l}}=A_{i_l}$\, where\, $A_{i_l}$ is the first element
$A$ of $\{A_i\}_{i=1}^{M}$  after $A_{i_{l-1}}$ with the property
$$ \left|A\bigcap \bigcup_{s\leq i_{l-1}}\tilde{A_s}\right|\leq \la |A|.$$
We continue this way until $\{A_i\}_{i=1}^{M}$ is exhausted. For the
subsequence\, $\{\tilde{A_i}\}_{i\in I}$\, obtained in this way we
clearly have\, $\tilde{A_i}=A_i, \, i\in I$. We claim that
\begin{equation}\label{ggegge}
\bigcup_{s<j}A_s \subseteq \{ x \in \rrr^{n} :
M_{\cb}(\chi_{E})(x)
> \lambda \} 
\end{equation}
where \,$E=\bigcup_{s<j}\tilde{A_s}$. Since $w$ satisfies condition
$(A)$ it is easy to deduce immediately \eqref{thirdscattproperty}.
To prove the claim we first observe that\,
$\bigcup_{s<j}\tilde{A_s}$ is trivially contained in the set   on the
right in \eqref{ggegge}. 
On the other hand, if $A\in \{A_s\}_{s<j}\setminus
\{\tilde{A_s}\}_{s<j}$, then at some index before $j$, A was
discarded in the above selection procedure. But this means that\, $
\left|A\bigcap \bigcup_{s<j_0}\tilde{A_s}\right|> \la |A|$\, for
some \,$j_0\leq j$ \,and hence, $A \subseteq \{ x \in \rrr^{n} :
M_{\cb}(\chi_{E})(x)
> \lambda \}$\, for\,$E=\bigcup_{s<j}\tilde{A_s}$ verifying the
claim. This finishes the proof of the lemma.
\end{proof}

\begin{proof}[ \textbf{Proof of Theorem \ref{1wghtStrChar}.} ]

For a vector $\vec f $ of bounded functions with compact support
consider again   the strong multilinear maximal function
$$
\cm_{\crec} (\vec f \,) (x)= \sup_{R\ni x} \prod_{\al=1}^m \lf(\f
1{|R|}\int_R |f_{\al}(y)|\,dy\r) \, ,
$$
where the supremum is taken over all  rectangles $R$  in $\rn$ with sides parallel to the axes.
We will use an argument that combines ideas from \cite{LOPTT} (second
proof of Theorem 3.7), \cite{J} and \cite{P3}.

Let $N>0$ be a   large  integer.  We  will prove the required
estimate for the quantity
\begin{equation}\label{qqq111}
\int_{2^{-N}<\cm_{\crec}(\vec f\,)\leq 2^{N+1}} \cm_{\crec}(\vec
f\,)(x)^{p}\, \nu_{\vec w}(x)\, dx
\end{equation}
with a bound independent of  $N$. For each integer $k$, $|k|\leq N$,
we   find a compact set
$$
K_k  \subset \{\cm_{\crec}(\vec f\,)>2^k \}
$$
satisfying
$$
\nu_{\vec w}(K_k) \leq \nu_{\vec w}(\{\cm_{\crec}(\vec f\,)>2^k \})\leq 2 \, \nu_{\vec w}(K_k)
$$
and a finite sequence $b_k=\{B_j^k\}_{j\geq 1}$ of sets $B_j^k \in
\crec$ with
\begin{equation}\label{bigger2^k}
 \prod_{\al=1}^m   \f 1 {|B_j^k|} \int _{B_j^k} |f_\al (y)| \, dy >2^k
 \qqq j\geq 1.
\end{equation}
 We  use a selection procedure as in
 \cite[p.\,372]{J}. 
For convenience we set $b_k=\emptyset$ if $|k|>N$
and
\begin{equation}\label{cases}
\Omega_k=
\begin{cases}
\bigcup_{s\geq k}\bigcup_{j}B_j^s  &\textup{when }\q |k|\leq N,
\\
\emptyset \qqqq  \qq &\textup{when }\q |k|>N.
\end{cases}
\end{equation}
Observe that these sets are decreasing in $k$, i.e.,
$\Omega_{k+1}\subset  \Omega_k$.

We now distribute the sets in $\bigcup_k b_k$ over $\mu$ sequences
$\{A_i(l)\}_{i\geq 1}$, $0\leq l\leq \mu-1 $, where $\mu$ will be
chosen momentarily to be an appropriately large natural number. Set
$i_0(0)=1$. In the first $i_1(0)-i_0(0)$ entries
of $\{A_i(0)\}_{i\geq 1}$, i.e., for
$$
 i_{0}(0)\leq i<i_{1}(0),
$$
we place the    elements of the sequence $b_N=\{B_j^N\}_{j\geq 1}$
  in the order indicated by the index $j$. For the next
  $i_{2}(0)-i_{1}(0)$ entries of $\{A_i(0)\}_{i\geq 1}$, i.e., for
$$
 i_{1}(0)\leq i<i_{2}(0),
$$
we place   the elements of the sequence $b_{N-\mu}$. We continue in this way until
we  reach the first integer $m_0$ such that $N-m_0\mu\geq -N$, when
we stop. For indices $i$  satisfying
$$
i_{m_0}(0)\leq i<i_{m_0+1}(0),
$$
we place  in the sequence   $\{A_i(0)\}_{i\geq 1}$ the    elements of
$b_{N-m_0\mu}$. The sequences
$\{A_i(l)\}_{i\geq 1}$, $1\leq l\leq \mu-1,$ are defined similarly,
starting from $b_{N-l}$ and using the families
$b_{N-l-s\mu}$, $s=0,1,\cdots, m_l$,  where $m_l$ is chosen
so that $N-l-m_l\mu\geq -N$. 

 Since $\nu_{\vec w} \in A_{\infty, _{\crec}}$,
$\nu_{\vec w}$ satisfies condition (A) by the remark made after
Definition \ref{conditionA} and we can apply Lemma
\ref{scatteredproperty} to each $\{A_i(l)\}_{i\geq 1}$ for some
fixed $0<\la<1$. Then we obtain sequences
$$
\{\tilde{A}_i(l)\}_{i\geq 1} \subset \{A_i(l)\}_{i\geq 1} \, , \qqq 0\leq
l\leq \mu-1,
$$
which are $\la$-scattered with respect to the
Lebesgue measure. In view of the definition of the set $\Omega_k$ and
the construction of the families $\{A_i(l)\}_{i\geq 1}$, we can use
 Assertion 3) 
of Lemma \ref{scatteredproperty} to obtain
$$  \nu_{\vec w}(\Omega_k) \leq  c\Bigg[ \nu_{\vec w}(\Omega_{k+\mu}) +
\nu_{\vec w}\lf( \bigcup_{ i_{m_l}(l)\leq i<i_{m_l+1}(l)
}\tilde{A}_i(l)\r) \Bigg] \leq c\, \nu_{\vec w}(\Omega_{k+\mu}) +
c\! \sum_{i=i_{m_l}(l)}^{i_{m_l+1}(l)-1} \nu_{\vec w}(\tilde{A}_i(l))
$$
if $k=N-l-m_l\mu$.  It will be enough to consider these indices $k$
because the sets $\Omega_k$ are decreasing.

Now, all the sets $\{\tilde{A}_i(l)\}_{i=i_m(l)}^{i_{m+1}(l)-1}$
belong to $b_k$ with  $k= N-l-m_l\mu $  and therefore
$$
\prod_{\al=1}^m   \f 1 {|\tilde{A}_i(l)|} \int _{\tilde{A}_i(l)}
|f_\al (x)| \, dx
>2^k.
$$
 It now readily follows that
$$
\int_{2^{-N}<\cm_{\crec}(\vec f\,)\leq 2^{N+1}} \cm_{\crec}(\vec
f\,)(x)^{p}\, \nu_{\vec w}(x)\, dx \leq 2^p \sum_k2^{kp} \nu_{\vec
w}(\Omega_k)
$$
and then
\begin{eqnarray*}
&& \sum_k2^{kp} \nu_{\vec w}(\Omega_k) \leq\  c \sum_k2^{kp}
\nu_{\vec w}(\Omega_{k+\mu})\,+c\,
 \sum_{l=0}^{\mu-1} \sum_{i\in I(l)} \nu_{\vec w}(\tilde{A}_i(l))
\left[\prod_{\al=1}^m   \f 1 {|\tilde{A}_i(l)|} \int
_{\tilde{A}_i(l)} |f_\al |   dx\right]^p  \\
&=&  c \,2^{-p\mu}\sum_k 2^{kp} \nu_{\vec w}(\Omega_{k})\,+c\,
 \sum_{l=0}^{\mu-1} \sum_{i\in I(l)} \nu_{\vec w}(\tilde{A}_i(l))
\bigg(\prod_{\al=1}^m   \f 1 {|\tilde{A}_i(l)|} \int
_{\tilde{A}_i(l)} |f_\al (x)| \, dx\bigg)^p.
\end{eqnarray*}
If we choose $\mu$ so large that $c\, 2^{ -\mu p}\leq\f12$ and since
everything involved is finite the first term on the right hand side
can be subtracted from the left hand side. This yields
$$
\int_{2^{-N}<\cm_{\crec}(\vec f\,)\leq 2^{N+1}}
\!\!\!\! \! \cm_{\crec}(\vec
f\,)^{p}\, \nu_{\vec w} \, dx \leq  2^{p+1}c\, \sum_{l=0}^{\mu-1}
\sum_{i\in I(l)} \nu_{\vec w}(\tilde{A}_i(l)) \left(\prod_{\al=1}^m
\f 1 {|\tilde{A}_i(l)|} \int _{\tilde{A}_i(l)} |f_\al| \,
dx\right)^p.
$$
We now proceed as in the proof of Theorem \ref{twoweights}: for each
$\al$ we use   H\"older's inequality  with exponents $p_{\al}'r$ and
$(p_{\al}'r)'$ to bound the previous expression on the right by
 \begin{eqnarray}\label{cont1}
&& 2^{p+1}c\, \sum_{l=0}^{\mu-1} \sum_{i\in I(l)}  \, \left(
\prod_{\al=1}^m \f 1 {|\tilde{A}_i(l)|} \int _{\tilde{A}_i(l)}
|f_{\al}| \, dx \right)^p\, \,
 \nu_{\vec w}(\tilde{A}_i(l))   \notag \\
 &&\quad\leq \ 2^{p+1}c\,
 \sum_{l=0}^{\mu-1} \sum_{i\in I(l)}    \prod_{\al=1}^m   \left(  \f 1 {|\tilde{A}_i(l)|}
 \int _{\tilde{A}_i(l)}  |f_{\alpha}|^{(p_{\al}'r)'}  w_{\alpha}^{ \f{(p_{\al}'r)'}{p_{\al}}} \! dx \right)^{\f{p}{(p_{\al}'r)'}}  \notag
  \\ &&\qqq\times
 \left(  \f 1 {|\tilde{A}_i(l)|}
 \int _{\tilde{A}_i(l)}  \! w_{\alpha}^{\f{-p_{\al}'r}{p_{\al}}}   dx \right)^{ \f{p}{p_{\al}'r}}\!
 \nu_{\vec w}(\tilde{A}_i(l))\, .
 \end{eqnarray}


Recall that  each $\si_{\al}=w_{\al}^{1-p_{\al}'}$ satisfies the
$A_{\infty,\crec}$ condition and hence by the reverse H\"older
inequality property of the basis $\crec$ (see \cite[p.\,458]{GCRdF})
there are constants $r_{\al}, c_{\al}>1$ such that
$$
\left(  \f 1 {|R|} \int _{R}  \si_{\al}^{r_{\al}}\,dx
\right)^{1/r_{\al}}  \leq c_{\al}\, \f 1 {|R|} \int _{R}
\si_{\al}\,dx  \qqq R\in \crec.
$$
Since $\vec w \in A_{\vec P, \crec}$ we can therefore bound
\eqref{cont1} by
\begin{equation}\label{cont2}
  C\, \sum_{l=0}^{\mu-1} \sum_{i\in I(l)}  \,  \prod_{\al=1}^m
\left( \f 1 {|\tilde{A}_i(l)|}
 \int _{\tilde{A}_i(l)} |f_{\al}|^{(p_{\al}'r)'}  w_{\al}^{\f{(p_{\al}'r)'}{p_{\al}}} \, dx \right)^{\f{p}{(p_{\al}'r)'}}
 |\tilde{A}_i(l)|\, .
\end{equation}
%
For each $l$ we let,
$$E_1(l)= \tilde{A}_1(l) \q \& \q E_i(l)= \tilde{A}_i(l) \setminus \bigcup_{s<i}
\tilde{A}_s(l) \qqq i>1.
$$
and we recall that the sequences $a(l)=\{\tilde{A}_i(l)\}_{i\in
I(l)}$ are $\la$--scattered with respect to the Lebesgue measure, hence %
$$
|\tilde{A}_i(l)| \leq \frac{1}{1-\la }|E_i(l)| \qqq i>1.
$$
Then we have the following estimate for \eqref{cont2}
\begin{equation}\label{cont3}
\f{C}{1-   \la  }    \sum_{l=0}^{\mu-1} \sum_{i\in I(l)}  \,
\prod_{\al=1}^m \left( \f 1 {|\tilde{A}_i(l)|}
 \int _{\tilde{A}_i(l)} |f_{\al}|^{(p_{\al}'r)'}  w_{\al}^{\f{(p_{\al}'r)'}{p_{\al}}} \, dx \right)^{\f{p}{(p_{\al}'r)'}}
|E_i(l)|.
\end{equation}
Now, since the family $\{E_i(l)\}_{i,l}$ consists of pairwise
disjoint sets and since
$$
\sum_{\al=1}^m \frac{p}{p_{\al}} =1\, ,
$$
using H\"older's inequality, we estimate \eqref{cont3} by a constant
multiple of
\begin{eqnarray*}
& & \sum_{l=0}^{\mu-1} \sum_{i\in I(l)}  \,
 \int _{E_i(l)}   \prod_{\al=1}^m  \left( M_{\crec} \left( |f_\al|^{(p_{\al}'r)'}  w_{\al}^{\f{(p_{\al}'r)'}{p_{\al}}} \right) \right)^{\f{p}{(p_{\al}'r)'}} \, dx \\
  & \leq  &\ c_{\mu}
 \int _{ \rn}   \prod_{\al=1}^m
\left( M_{\crec} \left( |f_{\al}|^{(p_{\al}'r)'}  w_{\al}^{\f{(p_{\al}'r)'}{p_{\al}}}\right) \right)^{\f{p}{(p_{\al}'r)'}} \, dx\\
& \lesssim &\ \prod_{\al=1}^m   \left(  \int _{ \rn}
\left( M_{\crec} \left( |f_{\al}|^{(p_{\al}'r)'}  w_{\al}^{\f{(p_{\al}'r)'}{p_{\al}}}\right) \right)^{\f{p_{\al}}{(p_{\al}'r)'}} \, dx \right)^{\f{p}{p_{\al}}}\\
& \lesssim &\ \prod_{\al=1}^m   \left(  \int _{ \rn}
|f_{\al}|^{p_{\al}} \, w_{\al}\,dx \right)^{\f{p}{p_{\al}}},
 \end{eqnarray*}
since $p_{\al}/(p_{\al}'r)'>1$, which gives the desired strong-type
estimate for \eqref{qqq111}. Letting $N\to \nf$ yields the claimed
assertion of the theorem.
\end{proof}

\section{Proof of the unweighted endpoint estimate for $\cm_{\crec}$} \label{strongmaximalthm}

In this section we   prove Theorem \ref{BSMF}. We begin by setting
 some notation and by proving several important ingredients required in the
proof.

\subsection{Orlicz spaces and normalized measures}\label{orlicz}

We need some basic facts from the theory of Orlicz spaces that we
  state without proof.  We refer to the book of Rao and Ren  \cite{RR} for the proofs and
  more information on Orlicz spaces.
  For  a lively
exposition of these spaces the reader may also consult the recent
book by Wilson \cite{W}.

 A Young function is a
continuous, convex, increasing function
$\Phi:[0,\infty)\to[0,\infty)$ with $\Phi(0)=0$ and such that
$\Phi(t)\to \infty$ as $t\rightarrow\infty$. The properties of
$\Phi$ easily imply that for $0<\ez<1$ and $ t\geq 0$
\begin{equation}\label{property1}
\Phi(\ez\, t) \leq \ez\,\Phi(t)\, .
\end{equation}
The $\Phi$-norm of a function $f$ over a set $E$ with finite measure
is defined by
\begin{equation}\label{property0}
\|f\|_{\Phi, E}=
\inf \lf\{\la>0\,:\, \frac{1}{|E|}\int_E  \Phi \left
(\frac{|f(x)|}{\la }\right )dx\leq 1\r\}.
\end{equation}
We will use the fact that
\begin{equation}\label{property2}
\|f\|_{\Phi, E} >1 \q \mbox{if and only if} \q \frac{1}{|E|}\int_E
\Phi \left (|f(x)|\right )dx > 1.
\end{equation}

 Associated with each Young function $\Phi$,  one can define a
complementary function
\begin{equation}\label{complementaria}
\bar \Phi(s)=\sup_{t>0}\{st-\Phi(t)\}
\end{equation}
for $s\ge 0$. Such $\bar \Phi$ is also a Young function and has the
property that
\begin{equation}\label{preHolder-Orlicz}
st \le C\, \Big[ \Phi(t) +\bar \Phi(s) \Big]
\end{equation}
for all $s,t\ge 0$. Also the $\bar\Phi$-norms  are related to the
$L_{\Phi}$-norms  via the  {\it the generalized   H\"older  
inequality}, namely
\begin{equation}\label{Holder-Orlicz}
\frac1{|E|}\,\int_{E}|f(x)\,g(x)|\,dx \le
2\,\|f\|_{\Phi,E}\,\|g\|_{\bar\Phi,E}.
\end{equation}

In this article we will be particularly interested  in the pair of
Young functions
$$
\Phi_n(t):= t[\log(e+t)]^{n-1} \q \mbox{and}\q \bar\Phi_n(t) \approx
\Psi_n(t):=\exp(t^{\f1{n-1}})-1, \q t\geq 0.
$$
It is the case that the pair $\Phi_n$, $\Psi_n$ satisfies
\eqref{preHolder-Orlicz}, see the article by Bagby \cite{Bagby}, page 887.
Observe that the above function $\Phi_n$ is submultiplicative, a
fact that will be used often in this article. That is,  for $s,t>0$
$$
\Phi_n(st)\le c\,\Phi_n(s)\,\Phi_n(t).
$$
In Section \ref{endpoint} we introduced the   function
$ \Phi^{(m)}:= \overbrace{\Phi\circ \Phi \circ\cdots\circ\Phi}^{m
\mbox{ times}} $
which  is increasing with respect to $m \in \mathbb N$.

\subsection{Some Lemmas}\label{key lemmas} 
We begin by proving some useful general lemmas about averaging
functions.

\begin{lemma}\label{observationYoung}
Let $\Phi$ be any Young function, then for any $f\geq 0$ and any
measurable set $E$
$$1<\|f\|_{\Phi, E} \q \Rightarrow  \q  \|f\|_{\Phi, E}\leq \f1{|E|}\int_E \Phi(f(x))\,dx\, . $$
\end{lemma}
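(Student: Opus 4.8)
The plan is to exploit the homogeneity-type inequality \eqref{property1} together with the normalization that defines the Orlicz norm. Recall from \eqref{property0} that $\|f\|_{\Phi,E}$ is the infimum of those $\la>0$ for which $\frac1{|E|}\int_E \Phi(|f|/\la)\,dx\le 1$. Write $\la_0=\|f\|_{\Phi,E}$ and assume $\la_0>1$. First I would observe that, by definition of the infimum and the monotone convergence theorem (the integrand is increasing as $\la$ decreases to $\la_0$, and $\Phi$ is continuous), the normalizing inequality actually holds at $\la=\la_0$ itself, i.e.
$$
\frac1{|E|}\int_E \Phi\!\left(\frac{f(x)}{\la_0}\right)dx \le 1.
$$
This is the standard closedness property of the Luxemburg norm; it is where the continuity of $\Phi$ is used.

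Next, since $\la_0>1$ we have $1/\la_0\in(0,1)$, so \eqref{property1} applies with $\ez=1/\la_0$ and $t=f(x)$, giving the pointwise bound $\Phi(f(x)/\la_0)\le (1/\la_0)\,\Phi(f(x))$ for every $x$. Integrating over $E$ and dividing by $|E|$,
$$
\frac1{\la_0}\cdot\frac1{|E|}\int_E \Phi(f(x))\,dx \;\ge\; \frac1{|E|}\int_E \Phi\!\left(\frac{f(x)}{\la_0}\right)dx,
$$
but this inequality goes the wrong way to be useful directly, so instead I would argue as follows: combining the two displays above,
$$
\frac1{|E|}\int_E \Phi(f(x))\,dx \;\ge\; \la_0 \cdot \frac1{|E|}\int_E \Phi\!\left(\frac{f(x)}{\la_0}\right)dx
$$
is false in general; rather the correct route is to note $\Phi(f/\la_0)\le \frac1{\la_0}\Phi(f)$ rearranges to $\Phi(f)\ge \la_0\,\Phi(f/\la_0)$ pointwise, and then integrating and using the first display,
$$
\frac1{|E|}\int_E \Phi(f(x))\,dx \;\ge\; \la_0\cdot \frac1{|E|}\int_E \Phi\!\left(\frac{f(x)}{\la_0}\right)dx.
$$
Hmm — this still needs the integral on the right to be close to $1$. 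The clean finish: the normalizing inequality at $\la_0$ together with $\la_0>1$ and minimality forces $\frac1{|E|}\int_E\Phi(f/\la_0)=1$ (if it were strictly less than $1$, one could decrease $\la_0$ slightly, using continuity of $\Phi$ and dominated convergence, contradicting that $\la_0$ is the infimum). Plugging $\frac1{|E|}\int_E\Phi(f/\la_0)=1$ into the pointwise bound $\Phi(f)\ge\la_0\Phi(f/\la_0)$ gives exactly $\frac1{|E|}\int_E\Phi(f(x))\,dx\ge\la_0=\|f\|_{\Phi,E}$, which is the claim.

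The main obstacle — really the only subtlety — is justifying that the defining inequality is attained at $\la=\la_0$ with equality $\frac1{|E|}\int_E\Phi(f/\la_0)\,dx=1$; this is the standard fact that the Luxemburg norm is achieved, and it follows from the continuity and monotonicity of $\Phi$ via a routine limiting argument (monotone/dominated convergence as $\la\downarrow\la_0$ for the $\le 1$ direction, and a contradiction with minimality of the infimum for the $\ge 1$ direction). Everything else is the one-line application of \eqref{property1}. I would present the limiting argument briefly and then the two-line computation.
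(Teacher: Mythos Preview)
Your argument is correct, but it takes a longer path than the paper's proof and introduces an unnecessary subtlety.

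You start from $\la_0=\|f\|_{\Phi,E}$ and work toward showing that the integral $\frac1{|E|}\int_E\Phi(f)\,dx$ dominates $\la_0$. This forces you to establish that the Luxemburg infimum is actually \emph{attained with equality}, i.e.\ $\frac1{|E|}\int_E\Phi(f/\la_0)\,dx=1$. You justify the $\le 1$ direction by monotone convergence and the $\ge 1$ direction by dominated convergence plus minimality. That works, but note that the dominated convergence step tacitly needs an integrable majorant; the natural one is $\Phi(f)$ itself, which is available precisely because $\la_0>1$ and because the conclusion is trivial when $\int_E\Phi(f)=\infty$. You should say this explicitly. (Alternatively, you could sidestep the equality entirely: for each $\la<\la_0$ one has $\frac1{|E|}\int_E\Phi(f/\la)>1$ and $\Phi(f)\ge\la\,\Phi(f/\la)$, whence $\frac1{|E|}\int_E\Phi(f)>\la$; let $\la\uparrow\la_0$.)

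The paper reverses the roles and thereby avoids all of this. It sets $\mu=\frac1{|E|}\int_E\Phi(f)\,dx$, observes from \eqref{property2} that the hypothesis $\|f\|_{\Phi,E}>1$ forces $\mu>1$, and then applies \eqref{property1} with $\ez=1/\mu$ to get
\[
\frac1{|E|}\int_E\Phi\Big(\frac{f}{\mu}\Big)\,dx\le \frac1{\mu}\cdot\frac1{|E|}\int_E\Phi(f)\,dx=1,
\]
which by definition of the Luxemburg norm gives $\|f\|_{\Phi,E}\le\mu$ directly. No limiting argument, no attainment of the infimum, no case split---just two lines. The moral: rather than proving the infimum is achieved, test the defining inequality at the specific candidate $\la=\mu$.
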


\begin{proof} Indeed, by homogeneity this is equivalent to
$$
\Big\|\frac{f}{\la_{f,E}}\Big\|_{\Phi, E}\leq 1\, ,
$$
where
$$
\la_{f,E}=   \f1{|E|}\int_E \Phi(f(x))\,dx\, ,
$$
which is the same as
$$
\f1{|E|}\int_E \Phi\Big(\frac{f(x)}{\la_{f,E}}\Big)\,dx\leq 1
$$
by definition of the norm \eqref{property0}. In view of
 Property \eqref{property1}, 
it would be enough to show that
$$
\la_{f,E}= \f1{|E|}\int_E \Phi(f(x))\,dx  \geq 1.
$$
But this is the case by definition of the norm (Property
\eqref{property2})
$$\|f\|_{\Phi, E}>1 \q \Longleftrightarrow  \q \f1{|E|}\int_E
\Phi(f(x))\,dx > 1.$$
\end{proof}

The following lemma is key for the main result.
 It should be mentioned that a different proof
of this lemma will appear in a paper by P\'erez, Pradolini, Torres and  Trujillo-Gonz\'alez \cite{PPTT};
see the proof of Theorem 4.1 therein.

\begin{lemma}\label{keylemma}
Let $\Phi$ be a submultiplicative Young function, let\, $m\in\nn$
and let $E$ be any set. Then there is a constant $c$ such that
whenever
\begin{equation}\label{mainassumption}
1 < \prod_{i=1}^m \|f_i\|_{\Phi, E}
\end{equation}
holds, then
\begin{equation}\label{main}
\prod_{i=1}^m \|f_i\|_{\Phi, E}\leq c\,\prod_{i=1}^m \f1{|E|}\int_E \Phi^{(m)} (f_i(x))\,dx \, .
\end{equation}
\end{lemma}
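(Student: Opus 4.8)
The plan is to argue by induction on $m$, using the submultiplicativity of $\Phi$ and the generalized Hölder inequality for Orlicz norms. For $m=1$ the statement is exactly Lemma \ref{observationYoung}: if $\|f_1\|_{\Phi,E}>1$ then $\|f_1\|_{\Phi,E}\le \f1{|E|}\int_E\Phi(f_1)\,dx=\f1{|E|}\int_E\Phi^{(1)}(f_1)\,dx$. For the inductive step, assume the statement holds for $m-1$ and suppose $1<\prod_{i=1}^m\|f_i\|_{\Phi,E}$. The key difficulty is that this product hypothesis does not force each individual factor $\|f_i\|_{\Phi,E}$ to exceed $1$, so one cannot apply Lemma \ref{observationYoung} factor by factor. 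The idea to get around this is to split the index set into those $i$ for which $\|f_i\|_{\Phi,E}>1$ and those for which $\|f_i\|_{\Phi,E}\le 1$; for the latter group the factors are harmless (bounded by $1$), but they must still be absorbed into the right-hand side, and this is where submultiplicativity and a careful renormalization come in.

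More concretely, here is the route I would take. First I would reduce, by the homogeneity of both sides under $f_i\mapsto \lambda_i f_i$ together with \eqref{property1}, to a normalized situation; alternatively, one can keep track of constants directly. The cleanest approach: set $\Lambda_i=\|f_i\|_{\Phi,E}$. By Lemma \ref{observationYoung}, whenever $\Lambda_i>1$ we have $\Lambda_i\le \f1{|E|}\int_E\Phi(f_i)\,dx$, hence (since $\Phi$ is increasing and $\Phi^{(m)}\ge\Phi$ because $\Phi(t)\ge t$ is false in general — so instead use that $\Phi^{(m)}\ge\Phi^{(1)}$ fails too) — the monotonicity we actually have is $\Phi^{(m)}\ge \Phi^{(m-1)}\ge\cdots$ only when $\Phi(t)\ge t$, which holds for $t$ large but not all $t$; to avoid this subtlety I would instead exploit submultiplicativity in the form $\Phi^{(m)}(st)\le c\,\Phi^{(m)}(s)\Phi^{(m)}(t)$ and the fact that $\Phi^{(m)}(t)\ge \Phi(t)$ for $t\ge t_0$ for a fixed $t_0$ depending only on $\Phi$ and $m$, splitting the integral over $\{f_i\le t_0\}$ (contributing a bounded constant since $|E|^{-1}\int_{E\cap\{f_i\le t_0\}}\Phi^{(m)}(f_i)\le \Phi^{(m)}(t_0)$) and $\{f_i>t_0\}$.

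The heart of the argument is then: pick an index $i_0$ with $\Lambda_{i_0}=\max_i\Lambda_i$, so that $\Lambda_{i_0}\ge (\prod_i\Lambda_i)^{1/m}>1$, giving $\|f_{i_0}\|_{\Phi,E}>1$ and thus, via Lemma \ref{observationYoung}, control of $\Lambda_{i_0}$ by $|E|^{-1}\int_E\Phi(f_{i_0})$. For the remaining indices, I would write $f_i = \Lambda_i\cdot(f_i/\Lambda_i)$ when $\Lambda_i>1$, use submultiplicativity $\Phi(f_i)\le c\,\Phi(\Lambda_i)\Phi(f_i/\Lambda_i)$, and feed $\Phi(f_i/\Lambda_i)$ into the induction hypothesis applied to the renormalized functions (whose $\Phi$-norms are $\le 1$, but whose product with $\Lambda_{i_0}$ still exceeds $1$ — one regroups the mass onto $f_{i_0}$). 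Iterating the submultiplicativity bound $m$ times turns each surviving $\Phi$ into $\Phi^{(m)}$ and produces the constant $c$ depending only on $\Phi$ and $m$. I expect the main obstacle to be precisely this bookkeeping — tracking how the normalization constants $\Lambda_i$ move between factors while ensuring each application of submultiplicativity only increases the ``depth'' of the composition by one, so that the final exponent is exactly $m$ and not larger; getting the product structure $\prod_i\f1{|E|}\int_E\Phi^{(m)}(f_i)$ on the right (rather than a single integral of a product) is what forces the inductive rather than direct approach.
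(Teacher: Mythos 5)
Your overall strategy (induction on $m$, base case via Lemma~\ref{observationYoung}, and the observation that the product hypothesis does not force each individual norm above~$1$) matches the paper's, and you correctly isolate the cases where all norms exceed~$1$ versus where some are $\le 1$. But there is a genuine gap in the inductive step: you never bring in Jensen's inequality, and without it the mechanism you describe does not actually increase the composition depth. Submultiplicativity, $\Phi(st)\le c\,\Phi(s)\Phi(t)$, splits a product; it does not turn $\Phi$ into $\Phi\circ\Phi$. Your phrase ``iterating the submultiplicativity bound $m$ times turns each surviving $\Phi$ into $\Phi^{(m)}$'' is the wrong mechanism. In the paper's argument the depth increment comes from a single application of Jensen's inequality $\Phi\bigl(\tfrac1{|E|}\int_E \Phi^{(m-1)}(f_i)\,dx\bigr)\le \tfrac1{|E|}\int_E \Phi^{(m)}(f_i)\,dx$, applied after submultiplicativity has been used to distribute $\Phi$ across the product of averages coming from the induction hypothesis.

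The second gap is in how you absorb the constants. You normalize the small-norm factors and try to feed $f_i/\Lambda_i$ into the induction hypothesis, but then $\|f_i/\Lambda_i\|_{\Phi,E}=1$ and the hypothesis $1<\prod\|\cdot\|$ is borderline; you acknowledge this (``regroup the mass onto $f_{i_0}$'') but do not say how. The paper goes in the opposite direction: it applies the induction hypothesis to the $m-1$ \emph{large}-norm factors (their product of norms exceeds $1$, so the hypothesis applies with room to spare), obtaining a scalar $R=\prod_{i\ge 2}\tfrac1{|E|}\int_E\Phi^{(m-1)}(f_i)\,dx$, and then \emph{multiplies the one small-norm function $f_1$ by $R$}. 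Since $\|f_1R\|_{\Phi,E}=\|f_1\|_{\Phi,E}\,R\ge\prod_i\|f_i\|_{\Phi,E}>1$, the $m=1$ case applies to $f_1R$, and one application of submultiplicativity plus Jensen finishes. Your choice of the maximal-norm index $i_0$ does not help here, because Lemma~\ref{observationYoung} applied to $f_{i_0}$ alone gives only one $\Phi$, not $\Phi^{(m)}$, and leaves you with no way to put the small-norm factors on the right-hand side in the required product form. Also, the detour you propose about whether $\Phi^{(m)}\ge\Phi$ and the threshold $t_0$ is avoidable; both your argument and the paper's implicitly use $\Phi\le c\,\Phi^{(m)}$, which holds for the relevant $\Phi=\Phi_n$ since $\Phi_n(t)\ge t$, and the paper does not need to split the domain of integration.
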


\begin{proof}
a) {\it The  case $m=1$.} This is the content of Lemma
\ref{observationYoung}.

b) {\it The case $m=2$.} Fix functions for which
\eqref{mainassumption} holds:
$$1 < \prod_{i=1}^2 \|f_i\|_{\Phi, E}.
$$
Without loss of generality we may assume that
$$  \|f_1\|_{\Phi, E}\leq \|f_2\|_{\Phi, E}\, .
$$
Observe that by \eqref{mainassumption} we must have
\,$\|f_2\|_{\Phi, E}
>1$.

Suppose first that $1\leq \|f_1\|_{\Phi, E}$,  then \eqref{main}
follows from Lemma \ref{observationYoung}:
$$
1 <  \prod_{i=1}^2 \|f_i\|_{\Phi, E}\leq \prod_{i=1}^2
\f1{|E|}\int_E \Phi (f_i(x))\,dx
$$
with $m=1$ and $c=1$.

Assume now
$$  \|f_1\|_{\Phi, E}\leq  1 \leq \|f_2\|_{\Phi, E}\, .
$$
Then we have by Lemma \ref{observationYoung}, submultiplicativity
and Jensen's inequality
\begin{eqnarray*}
1 &<&  \prod_{i=1}^2 \|f_i\|_{\Phi, E}
\\
&=& \|f_1\|_{\Phi,
E}\,\|f_2\|_{\Phi, E} \\
&=& \big\|f_1\,\|f_2\|_{\Phi, E} \big\|_{\Phi, E}    \\
&\leq & c\,\f1{|E|}\int_E \Phi (f_1(x)\|f_2\|_{\Phi, E})\,dx
\\
&\leq& c\,\f1{|E|}\int_E \Phi (f_1(x))\,dx\,\Phi (\|f_2\|_{\Phi, E})
\\
&\leq& c\,\f1{|E|}\int_E \Phi (f_1(x))\,dx\,\Phi (c\,\f1{|E|}\int_E
\Phi (f_2(x))\,dx\,)
\\
&\leq& c\,\f1{|E|}\int_E \Phi (f_1(x))\,dx\, \f1{|E|}\, \int_E
\Phi^{(2)} (f_2(x))\,dx
\\
 &\leq& c\, \prod_{i=1}^2 \f1{|E|}\int_E \Phi^{(2)} (f_i(x))\,dx\, ,
\end{eqnarray*}
which  is exactly \eqref{main}.

c) {\it The case $m\ge 3$.}  By induction, assuming that the result
holds for the integer $m-1 \geq 2$, we will prove it for $m$. Fix
functions for which \eqref{mainassumption} holds:
$$1 < \prod_{i=1}^m \|f_i\|_{\Phi, E},
$$
and without loss of generality  assume that
$$  \|f_1\|_{\Phi, E}\leq \|f_2\|_{\Phi, E}\leq \dots \leq   \|f_m\|_{\Phi, E}\, .
$$
Observe that we must have \,$\|f_m\|_{\Phi, E} >1$.

If we suppose that $1\leq \|f_1\|_{\Phi, E}$, \, then \eqref{main}
follows directly from Lemma \ref{observationYoung}:
$$
1 <  \prod_{i=1}^m \|f_i\|_{\Phi, E}\leq \prod_{i=1}^m
\f1{|E|}\int_E \Phi (f_i(x))\,dx
$$
with $c=1$ and $\Phi$ instead of $\Phi^{(2)}$.

Assume now that for some integer $k\in \{1,2,\dots, m-1\}$ we have
$$
 \|f_1\|_{\Phi, E}\leq \|f_2\|_{\Phi, E}\leq \cdots \leq \|f_k\|_{\Phi, E}
 \leq 1 \leq \|f_{k+1}\|_{\Phi, E}\leq \cdots \le \|f_m\|_{\Phi, E}\, .
$$
Since
\begin{equation*}
1 <  \prod_{i=1}^m \|f_i\|_{\Phi, E}= \|f_1\|_{\Phi,
E}\,\prod_{i=2}^m \|f_i\|_{\Phi, E},  
\end{equation*}
 we must have $\prod_{i=2}^m \|f_i\|_{\Phi, E}>1$.
Using the induction hypothesis we have
\begin{equation}\label{CP17}
\|f_1\|_{\Phi, E}\,\prod_{i=2}^m \|f_i\|_{\Phi, E} \leq c\,
\|f_1\|_{\Phi, E} \prod_{i=2}^m \f1{|E|}\int_E \Phi^{(m-1)}
(f_i(x))\,dx = \|f_1\,R\|_{\Phi, E}\, ,
\end{equation}
where $R=\prod_{i=2}^m \f1{|E|}\int_E \Phi^{(m-1)} (f_i(x))\,dx$.
Applying Lemma \ref{observationYoung} to the function\, $f_1\,R$ we
obtain by submultiplicativity and Jensen's inequality
\begin{eqnarray*}
\|f_1\,R\|_{\Phi, E} &\leq& c\,\f1{|E|}\int_E \Phi (f_1(x)\,R)\,dx \\
&\leq & c\,\f1{|E|}\int_E
\Phi (f_1(x))\,dx\,\, \Phi( R )\\
& \leq& c\, \f1{|E|}\int_E \Phi (f_1(x))\,dx  \prod_{i=2}^m
\Phi\bigg(\f1{|E|}\int_E \Phi^{(m-1)} (f_i(x))\,dx\!\bigg) \\
&\leq &c\,\f1{|E|}\int_E \Phi (f_1(x))\,dx\, \prod_{i=2}^m
\f1{|E|}\int_E \Phi^{(m)} (f_i(x))\,dx.
\end{eqnarray*}
Combining this result with \eqref{CP17} we deduce
$$
\prod_{i=1}^m \|f_i\|_{\Phi, E} \leq c\,\prod_{i=1}^m \f1{|E|}\int_E
\Phi^{(m)} (f_i(x))\,dx\, ,
$$
thus proving \eqref{main}.
\end{proof}

\subsection{The proof of the endpoint estimates} \label{proofofstrongmaximalthm}


\begin{proof}[\textbf{Proof of Theorem \ref{BSMF}.} ]
By homogeneity, positivity of the operator, and the doubling
property of $\Phi_n$, it is enough to prove
\begin{equation}\label{mainestimate}
 \q\lf|\lf\{x\in\rn:\, \cm_{\crec}(\vec f\,)(x)>1 \r\}\r| \le C
\lf\{\prod_{j=1}^m\int_{\rn} \Phi^{(m)}_n\lf(f_j(x)\r)\,dx
\r\}^{1/m},
\end{equation}
for some constant $C$ independent of the vector of nonnegative
functions $\vec f=(f_1,\cdots, f_m)$.

Let $E = \{x\in\rn:\, \cm_{\crec}(\vec f\,)(x)>1 \}$, then by the
continuity property of the Lebesgue measure we can find a compact
set $K$ such that $K \subset E $ and
$$|K| \leq |E| \leq 2 |K|. $$
Such a compact set $K$   can be covered with a finite collection of
rectangles   $\{R_j\}_{j=1}^N$ such that
\begin{equation}\label{bigger1}
\prod_{i=1}^m   \f 1 {|R_j|} \int _{R_j} f_i(y) \, dy >1,\q
j=1,\cdots,N.
\end{equation}

 We will use the following version of the
C\'ordoba-Fefferman rectangle covering lemma \cite{cf} due to
Bagby (\cite{Bagby} Theorem. 4.1 (C)): there are dimensional
positive constants $\de,c$ and a subfamily $\{\widetilde
R_j\}_{j=1}^{\ell}$ of $\{R_{j}\}_{j=1}^N$ satisfying
$$\bigg|\bigcup_{j=1}^N R_{j}\bigg|\leq c\,  \bigg|\bigcup_{j=1}^\ell \widetilde R_j\bigg|,$$
and
$$
\int_{\bigcup_{j=1}^\ell \widetilde R_j} \exp
\bigg(\de\,\sum_{j=1}^\ell \chi_{\widetilde
R_j}(x)\bigg)^{\f1{n-1}}\,dx \le 2\bigg|\bigcup_{j=1}^\ell
\widetilde R_j\bigg| \, .
$$
Setting $\widetilde E=\bigcup_{j=1}^\ell \widetilde R_j$ and
recalling that $\Psi_n(t)=\exp(t^{\f1{n-1}})-1$ the latter
inequality is
$$
\f1{|\widetilde E|}\int_{\widetilde E} \Psi_n \bigg(
\de\,\sum_{j=1}^\ell \chi_{\widetilde R_j}(x)\bigg) \,dx \leq 1
$$
which is equivalent to
\begin{eqnarray}\label{ee2}
\bigg\|\sum_{j=1}^\ell \chi_{\widetilde R_j}\bigg\|_{\Psi_n,
\widetilde E}\le \frac{1}{\de}
\end{eqnarray}
by the definition of the norm. Now, since
$$|E|\le 2|K| \leq C |\widetilde E|
$$
we can use \eqref{bigger1} and H\"older's inequality as follows
\begin{eqnarray*}
|\widetilde E| &&=  \bigg|\bigcup_{j=1}^\ell \widetilde R_j\bigg| \\
&&\leq \,\sum_{j=1}^\ell |\widetilde R_j|\\
&&\le  \sum_{j=1}^\ell \bigg(\prod_{i=1}^m \int_{\widetilde R_j}
f_i(y)\,dy \bigg)^{\frac1m} \\
&&\leq \bigg(\prod_{i=1}^m \sum_{j=1}^\ell
\int_{\widetilde R_j} f_i(y)\,dy \bigg)^{\frac1m}\\
&&\le \bigg(\prod_{i=1}^m \int_{\bigcup_{j=1}^\ell \widetilde R_j}
\sum_{j=1}^\ell \chi_{\widetilde R_j}(y) f_i(y)\,dy
 \bigg)^{\frac1m}\\
 &&= \bigg(\prod_{i=1}^m \int_{\widetilde E}
\sum_{j=1}^\ell \chi_{\widetilde R_j}(y) f_i(y)\,dy
 \bigg)^{\frac1m}.
\end{eqnarray*}
By this inequality and \eqref{Holder-Orlicz}, we deduce
\begin{eqnarray*}
1\ &\le &\ \prod_{i=1}^m \frac 1{|\widetilde E|} \int_{\widetilde E}
\sum_{j=1}^\ell \chi_{\widetilde R_j}(y) f_i(y)\,dy  \\
& \leq&\
\prod_{i=1}^m \bigg\|\sum_{j=1}^\ell \chi_{\widetilde
R_j}\bigg\|_{\Psi_n, \widetilde E} \|f_i\|_{\Phi_n, \widetilde E}\\
&\leq&\ \prod_{i=1}^m \frac{1}{\de} \|f_i\|_{\Phi_n, \widetilde E} \\
&=&\
\prod_{i=1}^m \Big\|\frac{f_i}{\de}\Big\|_{\Phi_n, \widetilde E}\, .
\end{eqnarray*}
Finally, it is enough to apply Lemma \ref{keylemma} and that
$\Phi_n^{(m)}$ is submultiplicative to conclude the
proof of \eqref{mainestimate}, which is the main part of the
theorem.

We now turn to the claimed
 sharpness of the theorem. In the case  $m=n=2$, we need to show that the
estimate
$$
 \lf|\lf\{x\in \mathbf R^2:\, \cm_{\crec}(f,g)(x)>\al^2\r\}\r|
\le C \lf\{  \int_{\mathbf R^2} \Phi_2
\lf(\f{|f(x)|}{\al}\r)\,dx\r\}^{\!\f12} \lf\{  \int_{\mathbf R^2}
\Phi_2 \lf(\f{|g(x)|}{\al}\r)\,dx\r\}^{\f12}
$$
cannot hold  for $\al>0$ and functions $f,g$ with a constant $C$
independent of these parameters.

For $N=1,2,\dots$, consider the functions
$$f=\chi_{ [0, 1]^2 } \q \textup{and} \q g_N= N \chi_{ [0, 1]^2 } $$
and the parameter $\al=\frac{1}{10}$. Then the left hand side of the
inequality reduces to
\begin{align*}
\Big|\big\{  x\in \rr^2: {\mathcal M }_{\crec} ( f, g_N )(x)  >
\f{1}{100} \big\}\Big| & =   \Big|\big\{x\in \rr^2: M_{\crec} (\chi_{[0,
1]^2 } ) (x) > \frac 1 { 10 \sqrt N } \big\}\Big| \\ & \approx    \sqrt N\,  (
\log N ),
\end{align*}
where the last estimate is a simple calculation concerning the
strong maximal function (i.e., the   case $m=1$) that can be found,
for instance, in \cite[p.\,384, Exercise 10.3.1]{GrMF}.

However, the right hand side is equal to
$$
C(\Phi_2( {1}/{\al}))^{1/2}\,(\Phi_2( {N}/{\al}))^{1/2}= C
(\Phi_2(10))^{1/2} \,(\Phi_2(10N))^{1/2} \approx \sqrt{ N \log N }
$$
and  obviously it cannot control the left hand side for $N$ large.

For general $m$, the vector $\vec f$ with
$$f_1 =f_2=  \cdots= f_{m-1}=\chi_{ [0, 1]^2 } \q {\rm and} \q
f_m= N \chi_{ [0, 1]^2 }$$
also provides a counterexample.
\end{proof}

\section{Interpolation between distributional estimates}\label{interp22}

We have seen that bi(sub)linear operators satisfy certain
distributional estimates that are variations of the usual weak $L^p$
estimates. Multilinear interpolation between a set of restricted
weak type conditions is a well understood topic, but the issue of
multilinear interpolation between more general distributional
estimates has not been studied. We begin this section with the
following interpolation result between distributional estimates.
The  result is not bilinear per se, as the first function remains in
the same space during the interpolation,
 however, it only requires two initial conditions instead of three required
 in the classical real-method bilinear interpolation, see for instance \cite{GrKa}.
 Also the next result can be applied to the maximal function
$\cm_{\crec}$, yielding an $L^1\times L^p$ estimate for it.

\begin{proposition}\label{interpolation1}
Let $T$ be a bisublinear operator. Suppose that there exists $B_1>0$
such that for all $f,\,g\in L_{\Phi_n}^1(\rn)$ and all $\al>0$ we
have
\begin{eqnarray}\label{T:1x1}
&&\lf|\lf\{x\in\rn:\, T(f, g)(x)>\al\r\}\r|
\le\sqrt{B_1\lf\|\Phi_n\lf(\f{|f|}{\sqrt\al}\r)\r\|_{L^1(\rn)}
\lf\|\Phi_n\lf(\f{|g|}{\sqrt\al}\r)\r\|_{L^1(\rn)}} \, .
\end{eqnarray}
Also suppose that there exists $B_2>0$ such that for all $f\in
L_{\Phi_n}^1(\rn)$, $g\in L^\nf(\rn)$ and all $\al>0$,
\begin{eqnarray}\label{T:1xinfty}
\lf|\lf\{x\in\rn:\, T(f, g)(x)>\al\r\}\r| \le B_2
\lf\|\Phi_n\lf(\f{|f|}{\sqrt\al}\r)\r\|_{L^1(\rn)}
\Phi_n\lf(\f{\|g\|_{L^\infty(\rn)}}{\sqrt\al}\r).
\end{eqnarray}
Then, for all $f\in L_{\Phi_n}^1(\rn)$, $g\in L_{\Phi_n}^p(\rn)$
with $p\in(1,\nf)$, and all $\al>0$,
\begin{eqnarray}\label{T:1xp}
&&\lf|\lf\{x\in\rn:\, T(f, g)(x)>\al\r\}\r|\\
&&\qqqq\le
C\lf\{B_1^{\f1{p}}B_2^{\f{p-1}p}\lf\|\Phi_n\lf(\f{|f|}{\sqrt\al}\r)\r\|_{L^1(\rn)}
\lf\|\Phi_n\lf(\f{|g|}{\sqrt\al}\r)\r\|_{L^p(\rn)}\r\}^{\f
p{p+1}},\nonumber
\end{eqnarray}
where $C>0$ depends only on $n$.
\end{proposition}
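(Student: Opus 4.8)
The plan is to run a Marcinkiewicz-type decomposition in the second variable, applying the off-diagonal estimate \eqref{T:1x1} to the part of $g$ where $|g|$ is large and the $L^\infty$-type estimate \eqref{T:1xinfty} to the part where $|g|$ is bounded, and then to optimize over the truncation height. Fix $\alpha>0$, $f\in L_{\Phi_n}^1(\rn)$, $g\in L_{\Phi_n}^p(\rn)$, and put $A=\|\Phi_n(|f|/\sqrt\alpha)\|_{L^1(\rn)}$, $G=\|\Phi_n(|g|/\sqrt\alpha)\|_{L^p(\rn)}$; I may assume $0<A,G<\infty$, since otherwise \eqref{T:1xp} is trivial or its left side vanishes by \eqref{T:1x1} or \eqref{T:1xinfty}. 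For a parameter $s>0$ I would split $g=g^s+g_s$ with $g^s=g\,\chi_{\{|g|>s\}}$ and $g_s=g\,\chi_{\{|g|\le s\}}$, so that $\|g_s\|_{L^\infty(\rn)}\le s$. By sublinearity of $T$ in its second variable,
\[
\{|T(f,g)|>\alpha\}\subseteq\{|T(f,g^s)|>\alpha/2\}\cup\{|T(f,g_s)|>\alpha/2\}.
\]
Applying \eqref{T:1x1} at level $\alpha/2$ to the first set, \eqref{T:1xinfty} at level $\alpha/2$ to the second, and using the doubling property of $\Phi_n$ to absorb the factors $\sqrt2$ coming from $\sqrt{\alpha/2}$ into a dimensional constant, this gives
\[
|\{|T(f,g)|>\alpha\}|\le C\sqrt{B_1\,A\,\|\Phi_n(|g^s|/\sqrt\alpha)\|_{L^1(\rn)}}+C\,B_2\,A\,\Phi_n(s/\sqrt\alpha).
\]

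The next step would be to control $\|\Phi_n(|g^s|/\sqrt\alpha)\|_{L^1(\rn)}$ by $G$. Since $\Phi_n(0)=0$ one has $\Phi_n(|g^s|/\sqrt\alpha)=\Phi_n(|g|/\sqrt\alpha)\,\chi_{\{|g|>s\}}$, so H\"older's inequality with exponents $p,p'$ yields $\|\Phi_n(|g^s|/\sqrt\alpha)\|_{L^1(\rn)}\le G\,|\{|g|>s\}|^{1/p'}$; and since $\Phi_n$ is increasing, Chebyshev's inequality gives $|\{|g|>s\}|\le|\{\Phi_n(|g|/\sqrt\alpha)\ge\Phi_n(s/\sqrt\alpha)\}|\le\Phi_n(s/\sqrt\alpha)^{-p}G^{p}$. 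Combining these, and using $1+p/p'=p$, $p/p'=p-1$,
\[
\|\Phi_n(|g^s|/\sqrt\alpha)\|_{L^1(\rn)}\le\Phi_n(s/\sqrt\alpha)^{-(p-1)}G^{p}
\]
(which in particular shows $g^s\in L_{\Phi_n}^1(\rn)$ for every $s>0$, so that the use of \eqref{T:1x1} above is legitimate). Writing $\tau=\Phi_n(s/\sqrt\alpha)$ --- which runs over all of $(0,\infty)$ as $s$ does, because $\Phi_n$ is continuous, increasing, and maps $[0,\infty)$ onto $[0,\infty)$ --- the last two displays combine to
\[
|\{|T(f,g)|>\alpha\}|\le C\Big(\sqrt{B_1\,A\,G^{p}}\;\tau^{-(p-1)/2}+B_2\,A\,\tau\Big).
\]

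It then remains to minimize the right-hand side over $\tau>0$. For $a,b>0$ the function $\tau\mapsto a\tau^{-(p-1)/2}+b\tau$ is minimized, up to a constant depending only on $p$, by balancing the two terms, its infimum being comparable to $a^{2/(p+1)}b^{(p-1)/(p+1)}$. Taking $a=\sqrt{B_1\,A\,G^{p}}$ and $b=B_2\,A$ this gives
\[
|\{|T(f,g)|>\alpha\}|\le C\,(B_1AG^{p})^{1/(p+1)}(B_2A)^{(p-1)/(p+1)}=C\,B_1^{1/(p+1)}B_2^{(p-1)/(p+1)}A^{p/(p+1)}G^{p/(p+1)},
\]
and since $\tfrac1p\cdot\tfrac{p}{p+1}=\tfrac1{p+1}$, $\tfrac{p-1}{p}\cdot\tfrac{p}{p+1}=\tfrac{p-1}{p+1}$, and the exponents of $A$ and $G$ are each $\tfrac{p}{p+1}$, the last quantity equals $C\big(B_1^{1/p}B_2^{(p-1)/p}\,A\,G\big)^{p/(p+1)}$, which is exactly \eqref{T:1xp}.

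The only genuinely delicate point is the passage in the second step from the $L^1$-size of the high piece $\Phi_n(|g^s|/\sqrt\alpha)$ to the $L^p$-size $G$ of $\Phi_n(|g|/\sqrt\alpha)$: this is where the distributional (rather than norm) form of the hypotheses is used, and it is this H\"older--Chebyshev estimate that manufactures the exponent $p-1=p/p'$ and hence, after the scalar optimization, the final outer power $p/(p+1)$. Everything else is routine: the doubling inequality $\Phi_n(\lambda t)\le C_\lambda\Phi_n(t)$ for fixed $\lambda>1$ to handle the $\alpha/2$ splitting, the elementary set inclusion from sublinearity, and the one-variable minimization; one should also keep the degenerate cases $A\in\{0,\infty\}$ or $G\in\{0,\infty\}$ out of the way at the outset, as indicated.
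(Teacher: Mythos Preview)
Your proof is correct and follows essentially the same route as the paper: split $g$ at a height $s$, apply \eqref{T:1x1} to the large part and \eqref{T:1xinfty} to the bounded part, bound the $L^1$ Orlicz integral of the large part by $G^{p}\Phi_n(s/\sqrt{\alpha})^{-(p-1)}$, and optimize in $\tau=\Phi_n(s/\sqrt{\alpha})$. The only cosmetic difference is in the derivation of that key bound: the paper uses the direct pointwise inequality $\Phi_n(|g|/\sqrt{\alpha/2})\le \Phi_n(|g|/\sqrt{\alpha/2})^{p}/\Phi_n(\epsilon)^{p-1}$ on the set $\{|g|>\epsilon\sqrt{\alpha/2}\}$, whereas you obtain the same estimate via H\"older plus Chebyshev; both yield the identical bound and the subsequent optimization is the same.
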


\begin{proof}
For any $\al>0$, we split $g$ as $g=g_\al+g^\al$ where
$$
g_\al:= g\chi_{\{|g|\le\ez\sqrt{\al/2}\}},\quad \mbox{and }\quad
g^\al:= g\chi_{|g|>\ez\sqrt{\al/2}}\, ,
$$
where $\ez$ is a positive quantity to be determined. Then,
\begin{eqnarray*}
&&\lf|\lf\{x\in\rn:\, T(f, g)(x)>\al\r\}\r|\\
&&\qqq\le\lf|\lf\{x\in\rn:\, T(f, g^\al)(x)>\al/2\r\}\r|
+\lf|\lf\{x\in\rn:\, T(f, g_\al)(x)>\al/2\r\}\r|\\
&&\qqq:=L_1+L_2\, .
\end{eqnarray*}
Since $\Phi_n$ is a strictly increasing function on $(0,\nf)$ and
$p>1$, we have
\begin{eqnarray}\label{e15}
\lf\|\Phi_n\lf(\f{|g^\al|}{\sqrt{\al/2}}\r)\r\|_{L^1(\rn)}
&&=\int_{\{|g|>\ez\sqrt{\al/2}\}} \Phi_n\lf(\f{|g(x)|}{\sqrt{\al/2}}\r)\,dx\\
&&\le\f1{\Phi_n(\ez)^{p-1}}\int_{\{|g|>\ez\sqrt{\al/2}\}} \Phi_n\lf(\f{|g(x)|}{\sqrt{\al/2}}\r)^p\,dx\nonumber\\
&&\le C
\f{\lf\|\Phi_n\lf(\f{|g|}{\sqrt\al}\r)\r\|_{L^p(\rn)}^p}{\Phi_n(\ez)^{p-1}}\,
,\nonumber
\end{eqnarray}
and hence, by \eqref{T:1x1},
\begin{eqnarray*}
L_1&&\le
\sqrt{B_1\lf\|\Phi_n\lf(\f{|f|}{\sqrt{\al/2}}\r)\r\|_{L^1(\rn)}
\lf\|\Phi_n\lf(\f{|g^\al|}{\sqrt{\al/2}}\r)\r\|_{L^1(\rn)}}\\
&&\le C\sqrt{\f{B_1}{\Phi_n(\ez)^{p-1}}
\lf\|\Phi_n\lf(\f{|f|}{\sqrt\al}\r)\r\|_{L^1(\rn)}
\lf\|\Phi_n\lf(\f{|g|}{\sqrt\al}\r)\r\|_{L^p(\rn)}^p}.
\end{eqnarray*}
Also, by \eqref{T:1xinfty},
\begin{eqnarray*}
L_2&&\le \lf\|\Phi_n\lf(\f{|f|}{\sqrt{\al/2}}\r)\r\|_{L^1(\rn)}
\Phi_n\lf(\f{\|g_\al\|_{L^\infty(\rn)}}{\sqrt{\al/2}}\r) \le C B_2
\Phi_n(\ez)\lf\|\Phi_n\lf(\f{|f|}{\sqrt\al}\r)\r\|_{L^1(\rn)}.
\end{eqnarray*}
Again, using $\Phi_n$ is a strictly increasing function on $(0,
\nf)$ with $\Phi_n(0)=0$ and $\Phi_n(\nf)=\nf$, we can choose
$\ez\in(0,\nf)$ such that
$$\Phi_n(\ez)=\lf(\f{B_1 \lf\|\Phi_n\lf(\f{|g|}{\sqrt\al}\r)\r\|_{L^p(\rn)}^p}
{B_2^2
\lf\|\Phi_n\lf(\f{|f|}{\sqrt\al}\r)\r\|_{L^1(\rn)}}\r)^{\f1{p+1}}.$$
For such an $\ez$, both $L_1$ and $L_2$ are bounded by
$$C B_1^{\f1{p+1}}B_2^{1-\f2{p+1}} \lf(\lf\|\Phi_n\lf(\f{|f|}{\sqrt\al}\r)\r\|_{L^1(\rn)}
\lf\|\Phi_n\lf(\f{|g|}{\sqrt\al}\r)\r\|_{L^p(\rn)}\r)^{\f p{p+1}},$$
which proves \eqref{T:1xp}.
\end{proof}

\begin{corollary}\label{JMZ2}
Let $p\in(1,\nf)$. Then, there exists a positive constant $C$
depending only on the dimension $n$ such that for all
 $f\in
L_{\Phi_n^{(2)}}^1(\rn)$, $g\in L_{\Phi_n^{(2)}}^p(\rn)$ and all
$\al>0$,
\begin{eqnarray*}
&& \lf|\lf\{x\in\rn:\, \cm_{\crec}(f, g)(x)>\al\r\}\r|  \\
&&\qq \le  C  \lf\{\int_{\rn}
 \Phi_n^{(2)} \lf(\f{|f(x)|}{\sqrt\al}\r)\,dx\r\}^{\f p{p+1}}
\lf\{\int_{\rn}  \Phi_n^{(2)}
\lf(\f{|g(x)|}{\sqrt\al}\r)^p\,dx\r\}^{\f 1{p+1}}.
\end{eqnarray*}
\end{corollary}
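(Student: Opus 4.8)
The plan is to deduce Corollary \ref{JMZ2} by applying Proposition \ref{interpolation1} to the operator $T=\cm_{\crec}$ in the bilinear case $m=2$, after observing that the proof of Proposition \ref{interpolation1} goes through verbatim when $\Phi_n$ is replaced throughout by $\Phi_n^{(2)}$. This substitution is legitimate because that proof uses only that $\Phi_n$ is a doubling Young function, strictly increasing on $(0,\nf)$ with $\Phi_n(\nf)=\nf$, and $\Phi_n^{(2)}=\Phi_n\circ\Phi_n$ shares all these features: it is strictly increasing, convex, vanishes at $0$, tends to $\nf$, and, using the submultiplicativity of $\Phi_n$ recalled in Section \ref{orlicz}, it is itself submultiplicative and doubling, since
$$
\Phi_n^{(2)}(st)=\Phi_n\big(\Phi_n(st)\big)\le \Phi_n\big(c\,\Phi_n(s)\Phi_n(t)\big)\le C\,\Phi_n^{(2)}(s)\,\Phi_n^{(2)}(t)\,.
$$
One also records the elementary fact that $\Phi_n(t)\ge t$, hence $\Phi_n^{(2)}=\Phi_n\circ\Phi_n\ge\Phi_n$ pointwise.

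It then suffices to verify the two hypotheses of Proposition \ref{interpolation1} for $T=\cm_{\crec}$, with $\Phi_n^{(2)}$ in place of $\Phi_n$. Hypothesis \eqref{T:1x1} is exactly Theorem \ref{BSMF} for $m=2$: setting $\al=\la^2$, the conclusion there reads
$$
\lf|\lf\{x\in\rn:\,\cm_{\crec}(f,g)(x)>\al\r\}\r|\le C\,\sqrt{\int_{\rn}\Phi_n^{(2)}\lf(\f{|f(x)|}{\sqrt\al}\r)dx\,\int_{\rn}\Phi_n^{(2)}\lf(\f{|g(x)|}{\sqrt\al}\r)dx}\,,
$$
so \eqref{T:1x1} holds with $B_1$ a multiple of $C^2$. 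For hypothesis \eqref{T:1xinfty} I would use the pointwise bound $\cm_{\crec}(f,g)(x)\le\|g\|_{\li(\rn)}\,M_{\crec}(f)(x)$, which is immediate since $\f1{|R|}\int_R|g|\le\|g\|_{\li}$ for every rectangle $R$. Consequently
$$
\lf\{x\in\rn:\,\cm_{\crec}(f,g)(x)>\al\r\}\subset\lf\{x\in\rn:\,M_{\crec}(f)(x)>\al/\|g\|_{\li(\rn)}\r\}\,,
$$
whose measure is controlled, via the Jessen--Marcinkiewicz--Zygmund inequality \eqref{e1}, by $C_n\int_{\rn}\Phi_n\big(|f(x)|\,\|g\|_{\li}/\al\big)\,dx$. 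Factoring the argument as $\tfrac{|f|}{\sqrt\al}\cdot\tfrac{\|g\|_{\li}}{\sqrt\al}$, applying submultiplicativity of $\Phi_n$, and then using $\Phi_n\le\Phi_n^{(2)}$ on the right-hand side yields \eqref{T:1xinfty} with $\Phi_n^{(2)}$ and a dimensional constant $B_2$.

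With both hypotheses in hand, the (adapted) Proposition \ref{interpolation1} gives \eqref{T:1xp} with $\Phi_n^{(2)}$; writing $\big\|\Phi_n^{(2)}(|g|/\sqrt\al)\big\|_{\lp(\rn)}^{p/(p+1)}=\big(\int_{\rn}\Phi_n^{(2)}(|g(x)|/\sqrt\al)^p\,dx\big)^{1/(p+1)}$ and absorbing the (dimensional) powers of $B_1$ and $B_2$ into $C$ produces precisely the asserted inequality. I expect the only genuinely delicate point to be the bookkeeping that justifies re-running the proof of Proposition \ref{interpolation1} with $\Phi_n^{(2)}$ rather than $\Phi_n$; once that is granted, the corollary is a direct assembly of Theorem \ref{BSMF}, the estimate \eqref{e1}, and the submultiplicativity of $\Phi_n$.
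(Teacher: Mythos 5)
Your proof is correct and follows exactly the route the paper intends: the corollary is stated immediately after Proposition~\ref{interpolation1} with no written proof because it is meant to follow by applying that proposition to $T=\cm_{\crec}$, and you have filled in the two pieces the paper leaves implicit — that the proof of Proposition~\ref{interpolation1} carries over unchanged when $\Phi_n$ is replaced by $\Phi_n^{(2)}$ (which it does, since all that is used is strict monotonicity, $\Phi(0)=0$, $\Phi(\infty)=\infty$, and doubling, and $\Phi_n^{(2)}$ inherits all of these, the doubling via submultiplicativity of $\Phi_n$), and that the two hypotheses are supplied by Theorem~\ref{BSMF} with $m=2$ (after the substitution $\al=\la^2$) and by the pointwise bound $\cm_{\crec}(f,g)\le\|g\|_{\li}\,M_{\crec}(f)$ combined with the Jessen--Marcinkiewicz--Zygmund estimate \eqref{e1}, submultiplicativity, and the monotone comparison $\Phi_n\le\Phi_n^{(2)}$.
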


Our last result yields strong type bounds for a bilinear operator
with the initial assumption of two weak type estimates and a third
distributional estimate.  This result provides a generalization of
the well-known real (or Marcinkiewicz) bilinear interpolation
theorem.

\begin{theorem}\label{interpolation2}

Let  $1<s_1<s_2< \infty$  and  $1/s_1 + 1/s_2 = 1/s $. Suppose that
a   bisublinear    operator $T$ maps $L^{ s_1} \times  L^{s_2}  \to
L^{s,\infty}$ with norm $B_1$, it maps $L^{s_2}  \times L^{ s_1} \to
L^{s,\infty}$ with norm $B_2$, it maps  $L^{2s}\times L^{2s}$ to $L^{s,\nf}$ with norm $B$,
 and it also satisfies the following
distributional estimate  at the   endpoint  $(1,1, 1/2)$
$$
\Big| \Big\{ |T(f_1,f_2)|>\la\Big\}\Big| \le {A}\, \bigg( \int
\Phi \Big(\f{f_1}{\sqrt{\la}} \Big)   \, dx \bigg)^{\f12} \bigg(
\int \Phi \Big(\f{f_2}{\sqrt{\la}} \Big)   \, dx \bigg)^{\f12}\, ,
$$
where $\Phi $ is a nonnegative function  that satisfies
 $ \Phi(0)=0$, and  
$$\int_0^1 \lambda^\al \Phi\Big(\frac{1}{\la}\Big)\,d\la<\infty$$
for all $\al>0$. Then, $T:  L^{p_1} \times L^{p_2}  \to  L^p$   for
all indices $p_1,p_2,p$  with $1/p_1+1/p_2=1/p$
and   $(1/p_1, 1/p_2, 1/p)$  is in the open convex hull of the
points $  (1/s_1, 1/s_2, 1/s)$, $(1/s_2, 1/s_1, 1/s)$  and  $(1/s_1,
1/s_1, 2/s_1)$.
\end{theorem}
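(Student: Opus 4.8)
The plan is to feed all four hypotheses into a single Calder\'on-type decomposition in order to get strong-type bounds along an open segment of interior points, and then to close the argument with the classical bilinear Marcinkiewicz interpolation theorem applied three points at a time. A preliminary geometric observation shows why one interpolation step cannot suffice: writing data in the coordinates $(1/p_1,1/p_2,1/p)$, the three genuine $L^p$-exponent hypotheses live at $P_1=(1/s_1,1/s_2,1/s)$, $P_2=(1/s_2,1/s_1,1/s)$ and $P_3=(1/(2s),1/(2s),1/s)$, which are \emph{collinear}, $P_3$ being the midpoint of $P_1P_2$ because $1/(2s)=\tfrac{1}{2}(1/s_1+1/s_2)$; hence only the endpoint datum, whose reciprocal point $(1,1,2)$ lies off that line, can produce the two-dimensional conclusion. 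By a routine density and truncation argument we may assume $f_1,f_2$ are bounded with compact support, so that $T(f_1,f_2)\in L^{s,\infty}$ and all distribution functions below are finite; and by homogeneity in each variable we normalize $\|f_1\|_{L^{p_1}}=\|f_2\|_{L^{p_2}}=1$, so that it suffices to bound $\int_0^\infty\lambda^{p-1}\big|\{\,|T(f_1,f_2)|>\lambda\,\}\big|\,d\lambda$ by an absolute constant.

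The first step is to prove $T\colon L^r\times L^r\to L^{r/2}$ for every $r$ in the open interval $(s_1,2s)$; note $1<s_1<r<2s<s_2$, all four inequalities following from $s_1<s_2$. Fix such an $r$, set $p=r/2$, and for each level $\lambda$ split $f_i=g_i^\lambda+b_i^\lambda$ at the threshold $\sqrt\lambda$, where $b_i^\lambda=f_i\chi_{\{|f_i|>\sqrt\lambda\}}$ is the ``tall'' part; the exponent $\tfrac{1}{2}$ is dictated by the $\sqrt\lambda$ in the endpoint hypothesis, as it is what leaves $\Phi$ evaluated on a $\lambda$-independent profile after a change of variables. By bisublinearity the distribution function of $T(f_1,f_2)$ at level $\lambda$ is at most the sum of those of $T(b_1^\lambda,b_2^\lambda)$, $T(b_1^\lambda,g_2^\lambda)$, $T(g_1^\lambda,b_2^\lambda)$, $T(g_1^\lambda,g_2^\lambda)$ at level $\lambda/4$, and we estimate these respectively by the endpoint datum, by the $L^{s_1}\times L^{s_2}$ bound, by the $L^{s_2}\times L^{s_1}$ bound, and by the $L^{2s}\times L^{2s}$ bound; the assignment is forced by the fact that a tall part of an $L^r$ function lies in $L^a$ only for $a\le r$, a short part only for $a\ge r$, together with $s_1<r<s_2$ and $r<2s$. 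For the tall$\times$tall term one multiplies by $\lambda^{p-1}$, applies the Cauchy--Schwarz inequality in $\lambda$ to separate the two factors, and then uses Fubini's theorem with the substitution $\lambda=|f_i(x)|^2v$ to turn $\int_0^\infty\lambda^{p-1}\!\int_{\rn}\Phi\big(b_i^\lambda(x)/\sqrt\lambda\big)\,dx\,d\lambda$ into $\big(\int_0^{1/4}v^{r/2-1}\Phi(v^{-1/2})\,dv\big)\,\|f_i\|_{L^r}^{r}$, the constant being finite precisely by the hypothesis $\int_0^1\lambda^{\alpha}\Phi(1/\lambda)\,d\lambda<\infty$ (equivalently $\int_1^\infty w^{-\beta}\Phi(w)\,dw<\infty$ for all $\beta>2$), used with $\beta=r+1>2$ since $r>1$. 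Each mixed term, say $T(b_1^\lambda,g_2^\lambda)$ through the $L^{s_1}\times L^{s_2}\to L^{s,\infty}$ bound, produces $\int_0^\infty\lambda^{p-1-s}\big(\int_{\{|f_1|>\sqrt\lambda\}}|f_1|^{s_1}\big)^{s/s_1}\big(\int_{\{|f_2|\le\sqrt\lambda\}}|f_2|^{s_2}\big)^{s/s_2}d\lambda$; since $s/s_1+s/s_2=1$ one applies H\"older's inequality in $\lambda$ with conjugate exponents $s_1/s$ and $s_2/s$, distributing the power of $\lambda$ so that after Fubini the two inner integrals come out equal to $\|f_1\|_{L^r}^{r}$ and $\|f_2\|_{L^r}^{r}$; the bookkeeping closes because $1/s_1+1/s_2=1/s$, and each inner $\lambda$-integral converges exactly because $s_1<r<s_2$. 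Finally the short$\times$short term is treated by Cauchy--Schwarz in $\lambda$ and Fubini, which reduce it to a multiple of $\prod_{i=1}^2\big(\int_{\rn}|f_i|^{2s}\int_{|f_i|^2}^\infty\lambda^{r/2-1-s}\,d\lambda\,dx\big)^{1/2}$, the inner $\lambda$-integral converging at infinity because $r/2<s$ and equalling a multiple of $\|f_i\|_{L^r}^{r}$. Summing the four contributions gives the claim; each constant is finite but degenerates as $r\to s_1$ or $r\to 2s$.

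The second step promotes this to the entire open hull. Fix a target point $X=\theta_1P_1+\theta_2P_2+\theta_3V_3$ with $\theta_i>0$ in the open convex hull, where $V_3=(1/s_1,1/s_1,2/s_1)$. For small $\varepsilon>0$ the point $Y_\varepsilon:=(1-\varepsilon)V_3+\varepsilon P_3$ lies on the open segment from $V_3$ to $P_3$, hence equals $(1/r,1/r,2/r)$ for some $r=r(\varepsilon)\in(s_1,2s)$, at which $T$ is of strong — hence of weak — type by the first step; moreover $P_1,P_2,Y_\varepsilon$ are the vertices of a non-degenerate triangle (they project to $(1/s_1,1/s_2)$, $(1/s_2,1/s_1)$, $(1/r,1/r)$ in the $(1/p_1,1/p_2)$-plane). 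Using $P_3=\tfrac{1}{2}(P_1+P_2)$ one checks that for $\varepsilon$ small enough $X$ is a strict convex combination of $P_1$, $P_2$, $Y_\varepsilon$; then the classical bilinear Marcinkiewicz interpolation theorem (see \cite{GrKa}), applied to the weak-type bounds at these three vertices, gives $T\colon L^{p_1}\times L^{p_2}\to L^p$ at $X$. Since $X$ was arbitrary in the open hull, the theorem follows.

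The heart of the matter is the first step, and within it the tall$\times$tall term: the endpoint hypothesis is a \emph{modular} estimate built from a non-power function, so the decomposition must be arranged — this is the role of the threshold $\sqrt\lambda$ — so that after Cauchy--Schwarz in $\lambda$ and the substitution $\lambda=|f_i|^2v$ the function $\Phi$ is evaluated only at the fixed profile $v^{-1/2}$, after which the integrability $\int_0^1\lambda^\alpha\Phi(1/\lambda)\,d\lambda<\infty$ finishes the estimate. This also explains why the conclusion is confined to the \emph{open} hull: the constants in the first step blow up at $s_1$ and $2s$, and the Marcinkiewicz constant degenerates as the auxiliary triangle flattens toward its vertices.
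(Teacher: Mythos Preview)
Your proof is correct and follows essentially the same route as the paper's own argument: split each function at height $\sqrt{\lambda}$, assign the four resulting pieces to the four hypotheses (tall$\times$tall to the endpoint $\Phi$-estimate, the two mixed pieces to the $L^{s_1}\times L^{s_2}$ and $L^{s_2}\times L^{s_1}$ bounds, short$\times$short to the $L^{2s}\times L^{2s}$ bound), integrate $\lambda^{p-1}$ against the distribution function using Cauchy--Schwarz/H\"older in $\lambda$ and Fubini, and thereby obtain strong type along the diagonal segment $p_1=p_2=2p$ with $s_1/2<p<s$; then close by real bilinear interpolation against the weak-type vertices $P_1,P_2$. Your second step spells out the interpolation more carefully than the paper, which simply cites \cite{GrKa}, but the substance is identical.
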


\begin{proof}
We begin by observing that if $T$ is actually linear in every entry (instead of sublinear), then
the condition that $T$
 maps $L^{2s}\times L^{2s}$ to $L^{s,\nf}$ is redundant as it can be deduced from the
hypotheses that
$T$ maps $L^{ s_1} \times  L^{s_2}  \to
L^{s,\infty}$   and that it maps $L^{s_2}  \times L^{ s_1} \to
L^{s,\infty}$ via bilinear complex interpolation since the point $(1/2s, 1/2s, 1/s)$ lies halfway
between  $(1/s_1,1/s_2,1/s)$, and $(1/s_2,1/s_1,1/s)$.

 We also note that in the desired range of exponents in
the conclusion of the theorem we always have  $s_1/2<p<s<s_2/2$. We
will show the result  holds when $p_1= p_2=2p$  and  $(p, p, p/2)$
is in the claimed open convex hull. Boundedness for the
 remaining triples
follows then by  real bilinear interpolation;  see \cite{GrKa}.  

We fix two functions $f_1,f_2$ in $L^{2p}$ with norm equal to $1$
and write
$$
f_{1,\la} = f_1\chi_{|f_1|>\sqrt{\la}}, \qq f_{1}^{\la} =
f_1\chi_{|f_1|\le \sqrt{\la}}
$$
and likewise for $f_2$.

We now estimate the measure
$$
\big|   \{ |T(f_1,f_2)|> 4\la \}\big| \le
I(\la)+II(\la)+III(\la)+IV(\la) \, ,
$$
where
\begin{eqnarray*}
I(\la) & = & \Big| \Big\{ |T(f_{1,\la},f_{2,\la})|>\la\Big\}\Big|, \\
II (\la)& = & \Big| \Big\{ |T(f_{1,\la},f_{2}^{\la})|>\la\Big\}\Big|, \\
III (\la)& = & \Big| \Big\{ |T(f_{1}^{\la},f_{2,\la})|>\la\Big\}\Big|, \\
IV (\la)& = & \Big| \Big\{
|T(f_{1}^{\la},f_{2}^{\la})|>\la\Big\}\Big|.
\end{eqnarray*}
First we take a look at
$$
\int_0^\nf \la^{p-1} I(\la) \, d\la ,
$$
which is bounded by
\begin{eqnarray*}
&&\int_0^\nf \la^{p-1} A \bigg( \int \Phi
\Big(\f{f_{1,\la}}{\sqrt{\la}} \Big)   \, dx \bigg)^{\f12}
\bigg( \int \Phi \Big(\f{f_{2,\la}}{\sqrt{\la}} \Big)   \, dx \bigg)^{\f12}\, d\la \\
&&\q \le  A\, \prod_{i=1}^2 \bigg( \int_0^\nf \la^{p-1}
\int_{|f_i|>\sqrt{\la}}
\Phi \Big(\f{f_{i}}{\sqrt{\la}} \Big)   \, dx \, d\la \bigg)^{\f12}\\
 &&\q= A\,
 \prod_{i=1}^2 \bigg( \int  \int_{\la=0}^{|f_i(x)|^2}  \la^{p-1}   \Phi \Big(\f{f_{i}}{\sqrt{\la}} \Big)
  d\la \, dx \bigg)^{\f12}\\
   &&\q= 2 A\, \prod_{i=1}^2 \bigg( \int  |f_i(x)|^{2p} \int_{\la=0}^{1}  \la^{2p-1}
\Phi  \Big(\f{1}{{\la}}\Big)
  d\la \, dx \bigg)^{\f12} \\
  &&\q=C_p\, A\, \prod_{i=1}^2 \| f_i \|_{L^{2p}}^p =C_p\, A\, ,
\end{eqnarray*}
where   we have made some simple changes of  variables  and the
 convergence of the integral is due to the fact that   $p>1/2$.

 We now split  $p-s= s(\f{p}{s_1}-\f12) + s(\f{p}{s_2}-\f12)$
 and
 we look at
$$
\int_0^\nf \la^{p-1} II(\la) \, d\la , 
$$
which can be estimated by
\begin{eqnarray*}
&& \int_0^\nf \la^{p-s} B_1^{s } \bigg( \int  | f_{1,\la}  |^{s_1}
\, dx \bigg)^{\f{s}{s_1}}
\bigg( \int |  f_2^\la  |^{s_2}    \, dx \bigg)^{\f{s}{s_2}}\, \f{d\la}{\la} \\
 &&\q \le B_1^{s }   \bigg(\int_0^\nf \la^{s(\f{p}{s_1}-\f12) \f{s_1}{s}}
 \int_{|f_1|>\sqrt{\la}}  | f_1  |^{s_1}  \, dx\,  \f{d\la}{\la} \bigg)^{\f{s}{s_1}}\\
  &&\qq\q\times
  \bigg(\int_0^\nf \la^{s(\f{p}{s_2}-\f12) \f{s_2}{s}}
 \int_{|f_2|\le \sqrt{\la}}  | f_2  |^{s_2}  \, dx\,  \f{d\la}{\la} \bigg)^{\f{s}{s_2}}  \\
  &&\q \le B_1^{s }   \bigg(\int | f_1(x)  |^{s_1} \int_0^{|f_1(x)|^2}  \la^{s_1(\f{p}{s_1}-\f12)  }
  \f{d\la}{\la}   \, dx\,   \bigg)^{\f{s}{s_1}}\\
  &&\qq\q\times
   \bigg(\int | f_2(x)  |^{s_2} \int_{|f_2(x)|^2}^{\nf}  \la^{s_2(\f{p}{s_2}-\f12)  } \f{d\la}{\la}   \, dx\,   \bigg)^{\f{s}{s_2}} \\
   &&\q \le B_1^{s } C(s_1,s_2,p) \|f_1\|_{L^{2p}}^{\f{2ps}{s_1}}  \|f_2\|_{L^{2p}}^{\f{2ps}{s_2}} \\
   &&\q= B_1^{s } C(s_1,s_2,p)
\end{eqnarray*}
and both integrals converge since $s_1<2p<s_2$. The term involving
$III(\la)$ is treated similarly using the bound $B_2$.

 Finally we look at
$$
  \int_0^\nf \la^{p-1} IV(\la) \, d\la,
$$
which is bounded by
\begin{eqnarray*}
&& \int_0^\nf \la^{p-1-s} B^s \bigg( \int  | f_{1}^{\la} |^{2s}  \,
dx \bigg)^{\f{s}{2s}}
\bigg( \int | f_{2}^{\la} |^{2s}    \, dx \bigg)^{\f{s}{2s}}\, d\la \\
&&\q \le B^s \prod_{i=1}^2 \bigg( \int_0^\nf \la^{p-1-s}
\int_{|f_i|\le \sqrt{\la}}
|f_i(x)|^{2s}  \, dx \, d\la \bigg)^{\f12}\\
&&\q = B^s \prod_{i=1}^2 \bigg( \int |f_i(x)|^{2s}
\int_{|f_i(x)|^2}^{\nf}  \la^{p-s-1}
  d\la \, dx \bigg)^{\f12}\\
 &&\q = B^s \prod_{i=1}^2 \bigg( \int |f_i(x)|^{2s} |f_i(x)|^{2p-2s}
   \int_{\la=1}^{\nf}  \la^{p-s-1}    d\la \, dx \bigg)^{\f12} \\
&&\q = B^s\, C(p,s)\, \prod_{i=1}^2 \| f_i \|_{L^{2p}}^p = B^s\,
C(p,s) \, ,
\end{eqnarray*}
where  the
 convergence of the integral is due to the fact that   $p<s$.
\end{proof}

 \begin{remark}\label{nonsymetric}
It is not hard to see that a similar result can be obtained in the
above theorem if instead of the symmetric weak type estimates
$L^{s_1}\times L^{s_2} \to L^{s,\nf}$ and $L^{s_2}\times L^{s_1} \to
L^{s,\nf}$ one has  $L^{t_1}\times L^{t_2} \to L^{s,\nf}$ and
$L^{r_1}\times L^{r_2} \to L^{s,\nf}$ where
$1/t_1+1/t_2=1/r_1+1/r_2=1/s$, $t_1/2<s<t_2/2$ and $r_1/2>s>r_2/2$.
For example, if $r_1\geq t_2$,  one gets strong bounds on the open
triangle with vertices $(1/t_1, 1/t_2, 1/s)$, $(1/r_1, 1/r_2, 1/s)$
and $(1/t_1, 1/t_1, 2/t_1)$.
\end{remark}

\begin{corollary}\label{fulltriangle}
Suppose a bisublinear  operator $T$ maps $L^{ s_1} \times  L^{s_2}  \to
L^{s,\infty}$ for all $1<s_1,s_2, s< \infty$  with  $1/s_1 + 1/s_2 =
1/s $ and
 also satisfies the  endpoint  distributional estimate of Theorem~\ref{interpolation2}.
 Then $T: L^{ p_1} \times  L^{p_2}  \to L^{p}$ for all $1/p_1 + 1/p_2 = 1/p $ with $1<p_1,p_2<\infty$
 and $1/2<p<\nf$.
\end{corollary}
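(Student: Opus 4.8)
The plan is to obtain the corollary as a direct consequence of Theorem~\ref{interpolation2}: I would show that, as the admissible parameters $s_1,s_2$ range over all their values, the open triangles of exponents furnished by that theorem already cover the entire region claimed in the corollary. It is convenient to parametrize exponents by the point $(1/p_1,1/p_2)$. First I would observe that the constraint $1<p_1,p_2<\infty$ is precisely the condition $(1/p_1,1/p_2)\in(0,1)^2$, and that it already forces $1/2<p<\infty$ because $1/p=1/p_1+1/p_2\in(0,2)$; thus the target region is exactly the open unit square.

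Next I would identify the triangle produced by Theorem~\ref{interpolation2} for a fixed pair $1<s_1<s_2<\infty$ with $1/s_1+1/s_2=1/s$. Writing $a=1/s_1$ and $b=1/s_2$, so that $0<b<a<1$, the three points $(1/s_1,1/s_2,1/s)$, $(1/s_2,1/s_1,1/s)$ and $(1/s_1,1/s_1,2/s_1)$ all lie in the plane $\{w=u+v\}$; hence their open convex hull, viewed in the $(1/p_1,1/p_2)$-plane, is the nondegenerate open triangle with vertices $(a,b)$, $(b,a)$, $(a,a)$, and an examination of its three edges (lying on $x=a$, $y=a$, $x+y=a+b$) shows it equals $\{(x,y):x<a,\ y<a,\ x+y>a+b\}$. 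I would also note that all the operator-norm hypotheses of Theorem~\ref{interpolation2} are automatic in the present setting, since the corollary assumes $T:L^{r_1}\times L^{r_2}\to L^{r,\infty}$ for every valid triple --- in particular for $(s_1,s_2,s)$, $(s_2,s_1,s)$ and $(2s,2s,s)$ --- while the endpoint distributional estimate is a standing hypothesis.

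It then remains to check that every point $(x_0,y_0)\in(0,1)^2$ lies in one such triangle. Given $(x_0,y_0)$, I would choose $a\in\big(\max(x_0,y_0),\ \min(x_0+y_0,1)\big)$; this interval is nonempty because $\max(x_0,y_0)<1$ and $\max(x_0,y_0)<x_0+y_0$ (the latter since $\min(x_0,y_0)>0$). Then I would choose $b\in\big(0,\ \min(a,\,x_0+y_0-a)\big)$, which is a nonempty interval since $a>0$ and $x_0+y_0-a>0$. By construction $0<b<a<1$ together with $x_0<a$, $y_0<a$ and $x_0+y_0>a+b$, so $(x_0,y_0)$ lies in the open triangle attached to $s_1=1/a$ and $s_2=1/b$; Theorem~\ref{interpolation2} then yields $T:L^{p_1}\times L^{p_2}\to L^p$ at the corresponding indices, which completes the proof.

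The argument is essentially bookkeeping, so I do not expect a genuine obstacle. The one point requiring a little care is the nonemptiness of the intervals from which $a$ and $b$ are chosen --- equivalently, the assertion that the symmetric triangles of Theorem~\ref{interpolation2} alone (without invoking the non-symmetric variant of Remark~\ref{nonsymetric}) already tile the open square --- and this comes down to the two elementary inequalities $\max(x_0,y_0)<\min(x_0+y_0,1)$ and $\min(a,\,x_0+y_0-a)>0$ noted above.
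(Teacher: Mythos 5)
Your overall strategy is sound: project to the $(1/p_1,1/p_2)$-plane, identify the open triangle supplied by Theorem~\ref{interpolation2} for each admissible $(s_1,s_2)$, and cover the open unit square. Your description of that triangle as $\{(x,y):\ x<a,\ y<a,\ x+y>a+b\}$ with $a=1/s_1$, $b=1/s_2$ is correct, and the nonemptiness computations for the $a$-interval are fine. However, there is a genuine gap where you assert that ``all the operator-norm hypotheses of Theorem~\ref{interpolation2} are automatic in the present setting.'' The corollary's standing hypothesis supplies the weak bound $T:L^{r_1}\times L^{r_2}\to L^{r,\infty}$ only for triples with $1<r<\infty$, so the three weak bounds at $(s_1,s_2,s)$, $(s_2,s_1,s)$ and $(2s,2s,s)$ that Theorem~\ref{interpolation2} requires are available only when $s>1$, i.e.\ when $a+b=1/s<1$. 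Your choice $b\in\bigl(0,\min(a,\ x_0+y_0-a)\bigr)$ does not enforce this: for instance, with $x_0=y_0=3/4$ your recipe permits $a=4/5$ and $b=3/5$, giving $a+b=7/5$ and $s=5/7<1$, for which none of $(s_1,s_2,s)$, $(s_2,s_1,s)$, $(2s,2s,s)$ is a ``valid triple'' covered by the corollary's hypothesis; the appeal to Theorem~\ref{interpolation2} is unjustified for such a choice.

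The repair is immediate: choose $b\in\bigl(0,\ \min(a,\ 1-a,\ x_0+y_0-a)\bigr)$ instead. This interval is still nonempty, since $a\in(0,1)$ makes both $a$ and $1-a$ positive and $a<x_0+y_0$ makes $x_0+y_0-a$ positive. With this choice one has $a+b<1$, hence $1<s<s_1<s_2<\infty$, so all three weak-type bounds needed in Theorem~\ref{interpolation2} are genuine consequences of the corollary's assumptions, and the rest of your covering argument goes through unchanged. (The paper itself gives no proof of the corollary, so there is nothing to compare against; your approach, once corrected, is the natural one.)
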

The situation in Corollary~\ref{fulltriangle} arises in the study of the bilinear strong maximal function but also in the the study of
certain commutators of bilinear singular integrals  and pointwise
multiplication with functions in $BMO$.  For a bilinear
Calder\'on-Zygmund operator $T$ as in \cite{GT4} and $b_1,b_2$ in
$BMO$ consider
$$
T_{b_1,b_2}(f_1,f_2)= b_1T(f_1,f_2)-T(b_1f_1,f_2) +
b_2T(f_1,f_2)-T(f_1,b_2f_2).
$$
It was shown in \cite{PT} that $T_{b_1,b_2}: L^{s_1}\times L^{s_2}
\to L^{s}$ for all $1<s_1,s_2, s< \infty$  with  $1/s_1 + 1/s_2 =
1/s $. The proof in \cite{PT} was based on weighted estimates and
cannot be extended to $1/2<s\leq 1$. A question was asked then what
kind of endpoint estimate the operator $T_{b_1,b_2}$ may satisfy.
Later on in \cite{LOPTT} a distributional estimate as in
Theorem~\ref{interpolation2} was obtained  with
$$
\Phi(t) = t(1+\log^+(t)),
$$
but the question still remained open about how to interpolate using
such an estimate. A different method was used in \cite{LOPTT} to
obtain the result $T_{b_1,b_2}: L^{s_1}\times L^{s_2} \to L^{s}$ for
$1/2<s\leq 1$, but we now  see from Corollary~\ref{fulltriangle}
that the result can also be obtained by interpolation. This puts in
evidence that the distributional endpoint estimates achieved in this
article are the appropriate ones from the point of
view of  interpolation. 

 It is also interesting to point out that the distributional estimates we use are not quite $L\log L$-type norms. In fact, even in the linear case and estimate 
$L\log L \to L^{1,\infty}$ together with a strong type $(p,p)$ for $p>1$ do not produce in general $(q,q)$ estimates for 
$1<q<p$, unless the operator in question is a translation invariant one in a compact setting. See \cite{tao}\footnote{We would like to thank M. J. Carro, M. Cwikel, and  N. Kalton  for pointing out this reference and facts and for interesting conversations on the subject.}

\end{document}